\NewDocumentCommand{\tens}{e{_^}}{%
  \mathbin{\mathop{\otimes}\displaylimits
    \IfValueT{#1}{_{#1}}
    \IfValueT{#2}{^{#2}}
  }%
}
\newcommand{\qhyp}[5]{\,\mbox{}_{#1}\phi_{#2}\!\left(
  \genfrac{}{}{0pt}{}{#3}{#4};#5\right)}
\newcommand{\id}{\mathrm{id}}
\newcommand\restr[2]{{
  \left.\kern-\nulldelimiterspace 
  #1 
  \littletaller 
  \right|_{#2} 
  }}
\newcommand{\littletaller}{\mathchoice{\vphantom{\big|}}{}{}{}}
\theoremstyle{plain}
\newtheorem{thm}{Theorem}[section]
\newtheorem{lem}[thm]{Lemma}
\newtheorem{prop}[thm]{Proposition}
\newtheorem{cor}[thm]{Corollary}
\theoremstyle{definition}
\newtheorem{defn}[thm]{Definition}
\theoremstyle{remark}
\newtheorem{Rem}[thm]{Remark}
\theoremstyle{plain}
\newtheorem*{thm*}{Theorem}
\newtheorem*{lem*}{Lemma}
\newtheorem*{prop*}{Proposition}
\newtheorem*{cor*}{Corollary}
\newtheorem*{conj*}{Conjecture}
\theoremstyle{definition}
\newtheorem*{ass*}{Assumption}
\newtheorem*{defn*}{Definition}
\theoremstyle{remark}
\newtheorem*{Rem*}{Remark}
\newcommand{\compconj}[1]{%
  \overline{#1}%
}
\DeclareMathOperator{\End}{End}
\DeclareMathOperator{\Rep}{Rep}
\DeclareMathOperator{\Hom}{Hom}
\DeclareMathOperator{\spn}{span}
\title{A proper generating functional on a Podle\'{s} sphere}
\author{Masato Tanaka
\footnote{%
tanakamasato.2121@gmail.com, masato.tanaka.c7@math.nagoya-u.ac.jp%
}}
\date{April 2023}
\begin{document}

\maketitle
\begin{abstract}
We construct a proper generating functional $L$ on a Podle\'{s} sphere and we show that $1$-cocycle arising from $L$ coincides with the one in our previous work. We also show that our 1-cocycle is purely non Gaussian and that the full `group' $C^{\ast}$-algebra of the quantum $SL(2,\mathbb{R})$ is liminal.
\end{abstract}
\section{Introduction}
In 2021, De Commer--Dzokou Talla \cite{DCDz1} constructed a quantization $U_q(\mathfrak{sl}(2,\mathbb{R})_t)$, where $0<q<1$, of the universal enveloping algebra of the Lie algebra of the spcial linear group $SL(2,\mathbb{R})$. This quantization is different from the one in the case where $q$ is a root of unity in \cite{M+}. As the no--go theorem by Woronowicz shows, it was hard to find a way to quantize arbitrary locally compact real Lie groups at the operator algebraic level except the compact case. The method De Commer--Dzokou Talla found is, however, applicable to any locally compact real Lie groups and is compatible with the framework of operator algebras.\par 
Since the quantum $SL(2,\mathbb{R})$ is a newly-born object and a quantization as as a coideal (not a Hopf algebra), there are of course many things to consider. For example, De Commer--Dzokou Talla found the appropriate classes of representations of $U_q(\mathfrak{sl}(2,\mathbb{R})_t)$ and classified irreducible ones \cite{DCDz1}. They also defined the monoidal structure of the representation category of the `convolution algebra' of the quantum $SL(2,\mathbb{R})$ and constructed an invariant integral on the `convolution algebra' of the quantum $SL(2,\mathbb{R})$ \cite{DCDz2}.
Based on \cite{DCDz1} and \cite{DCDz2}, the author of the present paper showed an irreducible $\ast$-representation induces an $SL(2,\mathbb{R})_t$-admissible $\ast$-representation and constructed a proper $1$-cocycle on arbitrary (non-standard) Podle\'{s} sphere and extended to a $1$-cocycle on the `convolution algebra' of the quantum $SL(2,\mathbb{R})$ \cite{MT}. The representation space of the cocycle in \cite{MT} is the direct sum of discrete series representations in \cite{DCDz1}, which meas that this cocycle is $L^2$-cohomology and we can expect some applications. Since the cocycle constructed in \cite{MT} and the generating functional which will be constructed in the present paper are proper, we should consider that the quantum $SL(2,\mathbb{R})$ is not of property (T) and is of Haagerup property. Thus, in later works, we should give appropriate definitions and characterizations of approximation properties such as property (T) and Haagerup property of Drinfeld double coideals. Considering the celebrated Schoenberg correspondence, we should construct a (proper) generating functional, a convolution semigroup of positive linear functionals, a L\'{e}vy process et cetera associated to the cocycle in \cite{MT}.\par
In this paper we consider generating functional and we give some observations of properties the quantum $SL(2,\mathbb{R})$ has. Our main results are the following:
\begin{itemize}
    \item We have a proper generating functional on $\mathcal{O}_q(S_t^2)$ whose associated 1-cocycle coincides with the proper 1-cocycle $C\colon\mathcal{O}_q(S_t^2)\to\mathcal{D}_2^+\oplus\mathcal{D}_2^-$ in \cite{MT}.  
    \item Our $1$-cocycle $C$ is purely non-Gaussian.
    \item The quantum $SL(2,\mathbb{R})$ is `liminal' in the sense that the $C^*$-completion of $C_c(SL_q(2,\mathbb{R}))$ with respect to the largest $C^*$-norm is liminal.
\end{itemize}
We will also give some proofs which were omitted in \cite{MT}.\\
Acknowledgements: This work was supported by the “Nagoya University 
Interdisciplinary Frontier Fellowship” by Nagoya University and JST, the establishment of university fellowships towards the creation of science technology innovation, Grant Number JPMJFS2120. This work was supported by the Research Institute for Mathematical Sciences, an International Joint Usage/Research Center located in Kyoto University.
\section{Preliminaries}
In this section, we recall the definitions of well-known objects which we need in the present paper. The best general references here are \cite{DCDz1}, \cite{DCDz2} and \cite{KS}.
\subsection{Notations and conventions}
\begin{itemize}
    \item Our vector spaces are over the complex field $\mathbb{C}$ unless otherwise stated.
    \item We denote the set of bounded operators on a normed space $X$ by $\mathbb{B}(X)$. 
    \item We denote the set of compact operators on a normed space $X$ by $\mathbb{K}(X)$. 
    \item For a preHilert space $\mathcal{H}$, let $\End({\mathcal{H}})$ denote the space of all adjointable linear maps on $\mathcal{H}$.
    \item Our inner products are linear in the second variable and conjugate linear in the first variable.
    \item We put $[a]\coloneqq \dfrac{q^a-q^{-a}}{q-q^{-1}}$, $\llbracket a\rrbracket\coloneqq q^a-q^{-a}$ and  $\{a\}\coloneqq q^a+q^{-a}$, where $a\in\mathbb{R}$.
    \item We use the $q$-Pochhammer symbols:
\begin{align*}
 (b;q)_\infty &\coloneqq\prod_{i=0}^\infty (1-b q^i),\\
 (b;q)_n&\coloneqq(b;q)_\infty / (b q^n;q)_\infty ,\\
(b_1,b_2,\ldots,b_r;q)_n&\coloneqq(b_1;q)_n(b_2;q)_n\cdots(b_r;q)_n.
\end{align*}
\item We use $q$-hypergeometric series:
\begin{align*}
   \qhyp {s+1}s{a_1,\ldots,a_{s+1}}{b_1,\ldots, b_s}{q,z}\coloneqq\displaystyle\sum_{i=0}^{\infty}\dfrac{(a_1,a_2,\ldots,a_{s+1};q)_iz^i}{(b_1,b_2,\ldots,b_s;q)_i(q;q)_i}_{.}
\end{align*}
\end{itemize}
\begin{defn}
We define $U_q(\mathfrak{su}(2))$ to be the universal unital algebra generated by the elements $k, k^{-1}, e$ and $f$ with the relations
\begin{align*}
    kk^{-1}=k^{-1}k=1,\quad ke=q^2ek,\quad kf=q^{-2}fk,\quad [e,f]=\dfrac{k-k^{-1}}{q-q^{-1}}.
\end{align*}
The Hopf $\ast$-algebra structure is given by the following:
\begin{align*}
 &\Delta(k^{\pm})=k^{\pm}\otimes k^{\pm},\quad\Delta(e)=e\otimes 1+k\otimes e,\quad\Delta(f)=1\otimes f+f\otimes k^{-1}\\
&\qquad\qquad\qquad\qquad\epsilon(k^{\pm})=1,\quad\epsilon(e)=0,\quad\epsilon(f)=0\\
    &\qquad\quad\quad\quad S(k^{\pm})=k^{\mp},\quad S(e)=-k^{-1}e,\quad S(f)=-fk\\
    &\qquad\qquad\qquad\qquad
    k^*=k,\quad e^*=fk,\quad f^*=k^{-1}e.
\end{align*}\end{defn}
The Hopf $\ast$-algebra $U_q(\mathfrak{su}(2))$ has a $(2s+1)$-dimensional $\ast$-representation for each $s\in\tfrac{1}{2}\mathbb{Z}_{\geq0}$. We denote them by $(\pi_s,H_s)$. We fix an orthonormal basis $\{\xi_i^s\}_{i=-s}^s$ such that
 \begin{align*}
     \pi_s(k)\xi_i^s&=q^{2i}\xi_i^s,\\
    \pi_s(e)\xi_i^s&=q^{i+1}([s-i][s+i+1])^{1/2}\xi_{i+1}^s,\\
    \pi_s(f)\xi_i^s&=q^{-i}([s+i][s-i+1])^{1/2}\xi^s_{i-1}.
 \end{align*}
\begin{defn}
    The algebra
$\mathcal{O}_q(SU(2))$ is the unital associative algebra generated by the elements $\alpha, \gamma$ with the relations
\begin{align*}
\alpha^{\ast}\alpha+\gamma^{\ast}\gamma=1, \alpha\alpha^{\ast}+q^2\gamma\gamma^{\ast}=1, \gamma^{\ast}\gamma=\gamma\gamma^{\ast}, \alpha\gamma=q\gamma\alpha, \alpha\gamma^{\ast}=q\gamma^{\ast}\alpha.
\end{align*}
The Hopf $\ast$-algebra structure is given by the following:
\begin{align*}
&\quad\qquad\qquad\Delta(\alpha)=\alpha\otimes\alpha-q\gamma^{\ast}\otimes\gamma, \Delta(\gamma)=\gamma\otimes\alpha+\alpha^{\ast}\otimes\gamma\\
    &\epsilon(\alpha)=1, \epsilon(\gamma)=0, S(\alpha)=\alpha^{\ast}, S(\alpha^{\ast})=\alpha, S(\gamma)=-q\gamma, S(\gamma^{\ast})=-q^{-1}\gamma^{\ast}.
\end{align*}
\end{defn}
These two Hopf $\ast$-algebras are in duality:
\begin{defn}
    The unitary pairing between $U_q(\mathfrak{su}(2))$ and $\mathcal{O}_q(SU(2))$ is given by
    \begin{align*}
        (\begin{bmatrix}
\alpha & -q\gamma^{\ast} \\
\gamma & \alpha^{\ast} \\
\end{bmatrix}, k)&=\begin{bmatrix}
q & 0 \\
0 & q^{-1} \\
\end{bmatrix},
(\begin{bmatrix}
\alpha & -q\gamma^{\ast} \\
\gamma & \alpha^{\ast} \\
\end{bmatrix}, e)=\begin{bmatrix}
0 & q^{1/2} \\
0 & 0 \\
\end{bmatrix},\\
(\begin{bmatrix}
\alpha & -q\gamma^{\ast} \\
\gamma & \alpha^{\ast} \\
\end{bmatrix}, f)&=\begin{bmatrix}
0 & 0 \\
q^{-1/2} & 0 \\
\end{bmatrix}_{.}
    \end{align*}
\end{defn}
By this pairing we have actions:
\begin{defn}
   We define the actions as follows.
   \begin{align*}
   h\rhd a&\coloneqq a_{(1)}(a_{(2)}, h)=(\id\otimes (\bullet, h))\Delta(a)\\
a\lhd h&\coloneqq (a_{(1)}, h)a_{(2)}=((\bullet, h)\otimes\id)\Delta(a)\\
a\rhd h&\coloneqq h_{(1)}(a, h_{(2)})=(\id\otimes (a,\bullet))\Delta(h)\\
h\lhd a&\coloneqq (a, h_{(1)})h_{(2)}=((a, \bullet)\otimes\id)\Delta(h),
   \end{align*}
   where $a\in\mathcal{O}_q(SU(2))$ and $h\in U_q(\mathfrak{su}(2))$.
\end{defn}
As in De Commer--Dzokou Talla's paper \cite{DCDz1}, for $t=\llbracket a \rrbracket$ we define the elements $\Tilde{B}_t$ and $B_t$ in the Hopf $\ast$-algebra $U_q(\mathfrak{su}(2))$ by \[\Tilde{B}_t\coloneqq q^{-1/2}(e-fk)-\sqrt{-1}(q-q^{-1})^{-1}tk+\sqrt{-1}(q-q^{-1})^{-1}t,\]
\[B_t\coloneqq q^{-1/2}(e-fk)-\sqrt{-1}(q-q^{-1})^{-1}t.\]
In the convention of \cite{DCDz1} and \cite{DCDz2}, we define $I_l\coloneqq \mathbb{C}[B_t]$ and $\mathcal{O}_q(S_t^2)_r\coloneqq I_l^{\perp}\coloneqq\{a\in\mathcal{O}_q(SU(2))\mid a\lhd B_t=\epsilon(B_t)a\}$. In this case $I_l$ is a left coideal of $U_q(\mathfrak{su}(2))$ and the Podle\'{s} sphere $\mathcal{O}_q(S_t^2)_r$ is a right coideal of $\mathcal{O}_q(SU(2))$. As in \cite{DCDz1} we define $U_q(\mathfrak{sl}(2,\mathbb{R})_t)_{r,l}\coloneqq I_l^{\perp}\bowtie I_l$. In the convention of \cite{MT} we define $I_r\coloneqq\mathbb{C}[R(B_t)]$ and $\mathcal{O}_q(S_t^2)_l\coloneqq I_r^{\perp}\coloneqq\{a\in\mathcal{O}_q(SU(2))\mid R(B_t)\rhd a=\epsilon(B_t)a\}$, where $R$ is a unitary antipode of $U_q(\mathfrak{su}(2))$. In this case, the algebra $I_r$ is a right coideal of $U_q(\mathfrak{su}(2))$, the Podle\'{s} sphere $\mathcal{O}_q(S_t^2)_l$ is a left coideal of $\mathcal{O}_q(SU(2))$ and we define $U_q(\mathfrak{sl}(2,\mathbb{R})_t)_{l,r}\coloneqq I_r^{\perp}\bowtie I_r$.\par
Note that $\mathcal{O}_q(SU(2))\simeq\bigoplus_{s\in\tfrac{1}{2}\mathbb{Z}_{\geq0}}\mathbb{B}(H_s)^{\ast}$. Put $\mathcal{U}\coloneqq\Hom(\mathcal{O}_q(SU(2)),\mathbb{C})\simeq\prod_{s\in\tfrac{1}{2}\mathbb{Z}_{\geq0}}\mathbb{B}(H_s)$ and $\mathcal{U}_{\infty}\coloneqq l_{\infty}-\prod_{s\in\tfrac{1}{2}\mathbb{Z}_{\geq0}}\mathbb{B}(H_s)$. Let $\mathcal{A}=\mathcal{O}_q(SU(2))$ and $\mathcal{B}_+=\mathcal{B}\cap\ker(\epsilon)$, where $\mathcal{B}=\mathcal{O}_q(S_t^2)_i$ and $i=l, r$. If we take the convention in \cite{DCDz1} and \cite{DCDz2}, we let $\mathcal{I}_l\coloneqq\Hom(\mathcal{A}/\mathcal{A}\mathcal{B}_+,\mathbb{C})\simeq\prod_{n\in\mathbb{Z}}\mathbb{B}(\mathbb{C}_{n,l})$, where $\mathbb{C}_{n,l}$ is a one dimensional vector space on which $\sqrt{-1}B_t$ acts as $[a+n]$.  If we take the convention in \cite{MT}, we put $\mathcal{I}_r\coloneqq\Hom(\mathcal{A}/\mathcal{B}_+\mathcal{A},\mathbb{C})\simeq\prod_{n\in\mathbb{Z}}\mathbb{B}(\mathbb{C}_{n,r})$, where $\mathbb{C}_{n,r}$ is a one dimensional vector space on which $\sqrt{-1}R(B_t)$ acts as $[a+n]$. We put $\mathcal{I}_{c,i}\coloneqq\bigoplus_n\mathbb{B}(\mathbb{C}_{n,i})$ and $\mathcal{I}_{\infty,i}\coloneqq l_{\infty}-\prod_n\mathbb{B}(\mathbb{C}_{n,i})$ where $i=l,r$. We define $C_c(SL_q(2,\mathbb{R})_t)_{i,j}\coloneqq\mathcal{O}_q(S_t^2)_i\bowtie\mathcal{I}_{c,j}$ where $(i,j)=(l,r),(r,l)$.\par
For simplicity we shall omit the subscripts $l$ and $r$ in the sequel.
\section{Observations of conventions}
\subsection{The algebras $\mathcal{U}_q$ and $U_q(\mathfrak{su}(2))$}
In the paper \cite{K} written by Koornwinder the algebra $\mathcal{U}_q$ is defined as the unital associative algebra generated by the elements $1,A,B,C$ and $D$ with the relations
\begin{align*}
AD=DA=1,\quad AB=qBA,\quad AC=q^{-1}CA, \quad [B,C]=\dfrac{A^2-D^2}{q-q^{-1}}_{.}   \end{align*}
This $\mathcal{U}_q$ has a Hopf $\ast$-algebra structure as follows:
\begin{align*}
   &\Delta(A)=A\otimes A,\quad \Delta(D)=D\otimes D ,\\
   &\Delta(B)=A\otimes B+B\otimes D, \quad\Delta(C)=A\otimes C+C\otimes D,\\
   &\epsilon(A)=\epsilon(D)=1,\quad \epsilon(B)=\epsilon(C)=0,\\
   &S(A)=D,\quad S(D)=A,\quad S(B)=-q^{-1}B,\quad S(C)=-qC,\\
   &A^{*}=A,\quad D^{*}=D,\quad B^{*}=C,\quad C^{*}=B.
\end{align*}
The Hopf $\ast$-algebra $\mathcal{U}_q$ has a $(2s+1)$-dimensional $\ast$-representation $(t^{s},H^{s})$ for each $s\in\tfrac{1}{2}\mathbb{Z}_{\geq0}$. We fix an orthonormal basis $\{e^s_i\}_{i=-s,-s+1,\ldots,s}$ of $H^{s}$ such that
\begin{align*}
    &t^{s}(A)e^s_i=q^{-i}e^s_i,\quad t^{s}(D)e^s_i=q^ie^s_i\\
    &t^{s}(B)e^s_i=([s-i+1][s+i])^{1/2}e^s_{i-1},\\
    &t^{s}(C)e^s_i=([s-i][s+i+1])^{1/2}e^s_{i+1}.
\end{align*}
In this Hopf $\ast$-algebra we define the element $X_{-a}$ by \[X_{-a}\coloneqq\sqrt{-1}q^{1/2}B-\sqrt{-1}q^{-1/2}C+(q-q^{-1})^{-1}t(A-D),\] where $t\coloneqq\llbracket a\rrbracket$. The kernel of the operator $t^{s}(X_{-a})$ is $1$-dimensional and spanned by $\sum_{i=-s}^sq^{-\tfrac{1}{2}i}c^{s,-a}_ie^s_i$ if $s$ is a non-negative integer \cite[Lemma 4.6]{K}. Similarly the kernel of $t^{s}(X^{\ast}_{-a})$ is $1$-dimensional and spanned by $\sum_{i=-s}^sq^{\tfrac{1}{2}i}c^{s,-a}_ie^s_i$ when $s$ is a non-negative integer \cite[Lemma 4.6]{K}. Here \[c_i^{s,-a}\coloneqq\dfrac{(\sqrt{-1})^iq^{-(s-a)i}q^{i^2/2}}{(q^2;q^2)_{s+i}^{1/2}(q^2;q^2)_{s-i}^{1/2}}\qhyp 32{q^{-2s+2i},q^{-2s},-q^{-2s+2a}}{q^{-4s},0}{q^2,q^2}=c_{-i}^{s,-a}.\]
By identifying $e^s_{-i}=\xi_i^s$, we have $H^s=H_s$. As operators on the Hilbert space $H_s$, we have \[\sqrt{-1}R(\Tilde{B}_t)=k^{-1/2}X_{-a},\] where $R$ is the unitary antipode of $U_q(\mathfrak{su}(2))$. 
\begin{Rem} 
The two algebras are related in such a way that 
\begin{align*}
    A=k^{1/2},\quad B=k^{-1/2}e, \quad C=fk^{1/2},\quad D=k^{-1/2}.
\end{align*}
Note that this is a formal expression.
\end{Rem}
Let us give a proof of \cite[Theorem 2.15]{MT}. The computations are essentially the same as in \cite{K}. Let $u_{i,j}^s\coloneqq u_{\xi_i^s,\xi_j^s}^s$ for $s\in\tfrac{1}{2}\mathbb{Z}_{\geq0}$. Let $n\in\mathbb{Z}_{\geq0}$. Let $\eta_{[a+2i]}^{n} \in H_{n}$ be a unit vector which is an eigenvector for $\pi_{n}(\sqrt{-1}R(B_t))$ with the eigenvalue $[a+2i]$. Then $\{u^{n}_{\eta_{[a+2i]}^{n}, \eta_{[a]}^{n}} \mid n\in \mathbb{Z}\geq0, i=-n, \ldots n\}$ forms a basis for the left coideal $\mathcal{O}_{q}(S_{t}^{2})$. Let $u_{[a+2i],[a+2j]}^n\coloneqq  u_{\eta_{[a+2i]}^{n}, \eta_{[a+2j]}^{n}}^n$. We let $\eta_{[a]}^{n}\coloneqq\lVert \xi^{(\llbracket a\rrbracket,1)} \rVert^{-1}\xi^{(\llbracket a\rrbracket,1)}$, where $\xi^{(\llbracket a\rrbracket,1)}=q^{1/2}\xi_{1}^{1}-(q+q^{-1})^{-1/2}\sqrt{-1}\llbracket a\rrbracket\xi_{0}^{1}+q^{-1/2}\xi_{-1}^1$ is a spherical vector. Write $\eta_{[a]}^{n}=\displaystyle\sum_{j=-n}^{n}a_{j}^n\xi^n_j$, where $a_i\in\mathbb{C}$. Then $u_{[a],[a]}^n=\displaystyle\sum_{i,j=-n}^{n} \compconj{a_{i}^n}{a_{j}^n}u_{i,j}^n$.
We have $\mathcal{O}_q(SU(2))
=\bigoplus_{s\in\tfrac{1}{2}\mathbb{Z}_{\geq0}}\mathcal{O}_q(SU(2))_s$, where $\mathcal{O}_q(SU(2))_s\coloneqq \spn\{u_{i,j}^s\mid i,j=-s,\ldots s\}$. By the Clebsch–-Gordan rule, any polynomials in $u_{[a],[a]}^1$ of degree less than or equal to $n$ belong to the direct sum of $\mathcal{O}_q(SU(2))_k$'s ($k=0,\ldots,n$) and they are $\Phi_{\mathcal{C}}$-bi-invariant.
For $\theta\in\mathbb{R}$, define an algebra homomorphism $\pi_{\theta/2}^{1}\colon\mathcal{O}_q(SU(2))\to\mathbb{C}$ by 
\[
\pi_{\theta/2}^{1}(\begin{bmatrix}
\alpha & -q\gamma^{\ast} \\
\gamma & \alpha^{\ast} \\
\end{bmatrix})
=\begin{bmatrix}
e^{\sqrt{-1}\theta/2} & 0 \\
0 & e^{-\sqrt{-1}\theta/2} \\
\end{bmatrix}_{.}
\]
For this map, we have $\pi_{\theta/2}^{1}(u_{i,j}^s)=\delta_{i,j}e^{\sqrt{-1}j\theta}$ for all $s,i$ and $j$, where $\delta_{i,j}$ is the Kronecker delta. From this we deduce that $\pi_{\theta/2}^{1}(u_{[a],[a]}^1)=\lVert \xi^{(\llbracket a\rrbracket,1)} \rVert^{-2}(qe^{\sqrt{-1}\theta}+q^{-1}e^{-\sqrt{-1}\theta}+(q+q^{-1})^{-1}\llbracket a\rrbracket^2)$. Hence the set $\{(u^{1}_{[a],[a]})^k\}_{k\geq0}$ is linearly independent. Then we have
\begin{lem}(cf.\ \cite[Proposition 4.7.]{K})\label{AW}
The element $u_{[a],[a]}^n$ is a polynomial in $u_{[a],[a]}^1$ of degree $n$.
\end{lem}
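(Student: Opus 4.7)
The plan is to produce a triangular expansion
\[(u_{[a],[a]}^1)^n = \sum_{k=0}^n c_{n,k}\, u_{[a],[a]}^k\]
with $c_{n,n}\neq 0$, and then invert it inductively to get $u_{[a],[a]}^n$ as a polynomial of degree exactly $n$ in $u_{[a],[a]}^1$.

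For existence of the expansion, I would argue as follows. Since $u_{[a],[a]}^1$ is $\Phi_{\mathcal{C}}$-bi-invariant and this property is preserved under multiplication, $(u_{[a],[a]}^1)^n$ is $\Phi_{\mathcal{C}}$-bi-invariant. The paragraph preceding the lemma already records the Clebsch--Gordan inclusion $(u_{[a],[a]}^1)^n\in\bigoplus_{k=0}^n\mathcal{O}_q(SU(2))_k$. In each integer-spin component $\mathcal{O}_q(SU(2))_k$, the $\Phi_{\mathcal{C}}$-bi-invariant subspace is one-dimensional and spanned by $u_{[a],[a]}^k$, because the spherical vector $\eta_{[a]}^k\in H_k$ is unique up to scalar for integer $k$ by \cite[Lemma 4.6]{K}. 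Combining these observations yields the expansion above for some scalars $c_{n,k}\in\mathbb{C}$.

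The crux is $c_{n,n}\neq 0$, which I plan to extract by evaluating $\pi_{\theta/2}^1$ and comparing leading Laurent coefficients. Writing $\eta_{[a]}^k=\sum_j a_j^k \xi_j^k$, one has $\pi_{\theta/2}^1(u_{[a],[a]}^k)=\sum_{j=-k}^k |a_j^k|^2 e^{\sqrt{-1}j\theta}$, a Laurent polynomial in $e^{\sqrt{-1}\theta}$ of degree at most $k$; hence the coefficient of $e^{\sqrt{-1}n\theta}$ on the right of the expansion is $c_{n,n}|a_n^n|^2$. On the left, $\pi_{\theta/2}^1((u_{[a],[a]}^1)^n)$ is the $n$-th power of the Laurent polynomial recalled in the excerpt, whose $e^{\sqrt{-1}n\theta}$-coefficient is $\lVert\xi^{(\llbracket a\rrbracket,1)}\rVert^{-2n} q^n\neq 0$. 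So $c_{n,n}\neq 0$ as soon as $a_n^n\neq 0$. This last nonvanishing is the real bottleneck; I plan to deduce it from the explicit formula for $c_i^{n,-a}$ in \cite{K}, noting that at $i=n$ the factor $(q^{-2n+2i};q^2)_j=(1;q^2)_j$ vanishes for all $j\geq 1$, so the ${}_3\phi_2$ series collapses to its $j=0$ term and yields a manifestly nonzero value; the identification $e_i^s=\xi_{-i}^s$ and $c_{-i}^{s,-a}=c_i^{s,-a}$ then give $a_n^n\propto q^{n/2}c_n^{n,-a}\neq 0$.

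With $c_{n,n}\neq 0$ in hand, an induction on $n$ concludes: assuming each $u_{[a],[a]}^k$ with $k<n$ has already been written as a polynomial of degree $k$ in $u_{[a],[a]}^1$, solving the triangular relation for $u_{[a],[a]}^n$ presents it as a polynomial in $u_{[a],[a]}^1$ of degree exactly $n$ with leading coefficient $c_{n,n}^{-1}$. The main obstacle is the nonvanishing of $a_n^n$; everything else is bookkeeping on the Peter--Weyl and Clebsch--Gordan structure already in place.
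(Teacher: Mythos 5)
Your argument is correct and rests on the same ingredients as the paper's: the Clebsch--Gordan containment $(u^1_{[a],[a]})^n\in\bigoplus_{k=0}^n\mathcal{O}_q(SU(2))_k$, the one-dimensionality of the $\Phi_{\mathcal{C}}$-bi-invariant part of each integer-spin block, and evaluation under $\pi^1_{\theta/2}$ --- the paper merely packages them as a dimension count (the $n+1$ linearly independent powers $1,u^1_{[a],[a]},\dots,(u^1_{[a],[a]})^n$ must span the $(n+1)$-dimensional bi-invariant part of $\bigoplus_{k=0}^n\mathcal{O}_q(SU(2))_k$, and $u^n_{[a],[a]}\notin\bigoplus_{k<n}\mathcal{O}_q(SU(2))_k$ pins the degree) rather than as your triangular inversion. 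One remark: the step you call the real bottleneck, $a^n_n\neq 0$, is not one, since your own leading-coefficient comparison reads $c_{n,n}\lvert a^n_n\rvert^2=\lVert\xi^{(\llbracket a\rrbracket,1)}\rVert^{-2n}q^n\neq 0$ and hence forces $c_{n,n}\neq 0$ and $a^n_n\neq 0$ simultaneously, so the detour through the explicit ${}_{3}\phi_{2}$ formula for $c^{n,-a}_n$ can be dropped.
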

Thus we see that there exists a sequence $\{P_n\}_{n\geq0}$ of polynomials such that $P_0=1$, $\deg{P_n}=n$ and $u_{[a],[a]}^n=P_{n}(u_{[a],[a]}^1)$ for all $n\geq1$. Apply the map $\pi_{\theta/2}^{1}$ to the identity $\displaystyle\sum_{i,j=-n}^{n}\compconj{a_{i}^n}{a_{j}^n}u_{i,j}^n=P_{n}(u_{[a],[a]}^1)$, then we have
\[\displaystyle\sum_{j=-n}^{n}|a^n_{j}|^{2}e^{\sqrt{-1}j\theta}=P_{n}(\lVert \xi^{(\llbracket a\rrbracket,1)} \rVert^{-2}(qe^{\sqrt{-1}\theta}+q^{-1}e^{-\sqrt{-1}\theta}+(q+q^{-1})^{-1}\llbracket a\rrbracket^2)).\]
Put $z=qe^{\sqrt{-1}\theta}$. We have
\[\displaystyle\sum_{j=-n}^{n}|a^n_{j}q^{-j/2}|^{2}z^j=P_{n}(\dfrac{z+z^{-1}+(q+q^{-1})^{-1}\llbracket a\rrbracket^2}{q+q^{-1}+(q+q^{-1})^{-1}\llbracket a\rrbracket^2}).\]
Now let $Q_n(X)\coloneqq P_n(\tfrac{2X+(q+q^{-1})^{-1}\llbracket a\rrbracket^2}{q+q^{-1}+(q+q^{-1})^{-1}\llbracket a\rrbracket^2})$. Then we have
\begin{align*}
  \displaystyle\sum_{j=-n}^{n}|a^n_{j}q^{-j/2}|^{2}z^j&=Q_n((z+z^{-1})/2)  
\end{align*}
for all $z\in\{z\in\mathbb{C}\mid |z|=q\}$. By the identity theorem, this equation holds for all $z\in\{z\in\mathbb{C}\mid 1/2<|z|<2\}$, in particular for all $z$ with $|z|=1$. Therefore by letting $z=e^{\sqrt{-1}\theta}$, we have
\begin{align*}
    \displaystyle\sum_{j=-n}^{n}|a_{j}^{n}q^{-j/2}|^2e^{\sqrt{-1}j\theta}&=Q_n(\cos\theta).
\end{align*}
Apply $k^{-1/2}$ to the identity
\[\eta^s_{[a]}=\displaystyle\sum_{i=-n}^na^n_i\xi^n_i\]
then we get
\[k^{-1/2}\eta^s_{[a]}=\displaystyle\sum_{i=-s}^sq^{-i}a^s_i\xi^s_i\in\ker(X^{\ast}_{-a})=\mathbb{C}\displaystyle\sum_{i=-s}^sq^{-\tfrac{1}{2}i}c^{s,-a}_{-i}\xi^s_i.\]
Hence there exists some $\alpha_n\in\mathbb{C}$ such that $\alpha_na_i^n=q^{\tfrac{1}{2}i}c^{n,-a}_{-i}$ for all $i$. We can compute the absolute value of $\alpha_n$ by $\sum_i|a_i^n|^2=1$ and then
\[|\alpha_n|^2=\displaystyle\sum_{i=-n}^nq^i|c_{-i}^{n,-a}|^2=\displaystyle\sum_{i=-n}^nq^i|c_{i}^{n,-a}|^2.\]
It follows that
\begin{align*}
    \displaystyle\sum_{i=-n}^{n}|c_{i}^{n,-a}|^2e^{\sqrt{-1}i\theta}&=|\alpha_n|^2Q_n(\cos\theta).
\end{align*}
 Consulting \cite[Lemma 4.6., Remark 4.8.]{K}, we get
\begin{align*}
    Q_n(\bullet)=|\alpha_n|^{-2}|c^{n,-a}_n|^2(q^{2n+2};q^2)_{n}^{-1}p_n(\bullet),
\end{align*}
where the Askey--Wilson polynomial $p_n$ is defined by
\begin{align*}
   & p_n(\cos\theta; -q^{-2a+1}, -q^{2a+1}, q, q| q^2)\\
    &=(-1)^{n}q^{n(-2a+1)}(q^2,-q^{-2a+2},-q^{-2a+2};q)_n\\
    &\cdot\qhyp 43{q^{-n},q^4q^{n-1},-q^{-2a+1}e^{\sqrt{-1}\theta},-q^{-2a+1}e^{-\sqrt{-1}\theta}}{q^2,-q^{-2a+2},-q^{-2a+2}}{q,q},
\end{align*}
and
\begin{align*}
    c^{n,-a}_n&=\dfrac{(\sqrt{-1})^{n}q^{-(n-a)n}q^{n^2/2}}{(q^2;q^2)^{1/2}_{2n}}
    \qhyp 32{1,q^{-2n},-q^{-2n+2a}}{q^{-4n},0}{q^2,q^2}\\
    &=\dfrac{(\sqrt{-1})^{n}q^{-(n-a)n}q^{n^2/2}}{(q^2;q^2)^{1/2}_{2n}}_{.}
\end{align*}
\subsection{Left v.s. right}
 In \cite{DCDz1} and \cite{DCDz2}, the algebras $U_q(\mathfrak{sl}(2,\mathbb{R})_t)$ and $\mathcal{O}_q(S_t^2)\bowtie\mathcal{I}_c$ are unital right coideal. On the other hand, in \cite{MT}, they are unital left coideal. The author of \cite{MT} takes his convention in order to consider the growth of $1$-cocycle on $SL_q(2,\mathbb{R})$. Let $\mathcal{D}_2^{\pm}$ be the discrete series representations of $U_q(\mathfrak{sl}(2, \mathbb{R})_t)$ \cite[Theorem 3.17, Remark3.18]{DCDz1}. If we take the convention in \cite{DCDz1} and \cite{DCDz2}, construct a $1$-cocycle $C\colon\mathcal{O}_q(S_t^2)\to\mathcal{D}^+_2\oplus\mathcal{D}^-_{2}$ in the same way as in \cite{MT} and consider the growth of our $1$-cocycle $C^{n\ast}C^{n}$ as in \cite[Definition 3.2]{MT}, then $C^{n\ast}C^{n}$ is a $(2n+1)\times(2n+1)$-matrix:
\begin{align*}
    C^{n\ast}C^{n}&=\sum_{i,j=-n}^{n}(\eta^n_{[a]}\eta^{n\ast}_{[a+2i]})^{\ast}\eta^n_{[a]}\eta^{n\ast}_{[a+2j]}C(u^n_{[a],[a+2i]})^{\ast}C(u^n_{[a],[a+2j]})\\
    &=\sum_{i=-n}^{n}\eta^{n}_{[a+2i]}\eta^{n\ast}_{[a+2i]}C(u^n_{[a],[a+2i]})^{\ast}C(u^n_{[a],[a+2i]})\\
    &=\begin{bmatrix}
a_{n,n} & \cdots & 0 & \cdots & 0\\
\vdots & \ddots &        &        & \vdots \\
0 &        & 0 &        & 0 \\
\vdots &        &        & \ddots & \vdots \\
0 & \cdots & 0 & \cdots & a_{-n,-n}
\end{bmatrix}_{.}
\end{align*}
where $a_{i,i}\coloneqq C(u^n_{[a],[a+2i]})^{\ast}C(u^n_{[a],[a+2i]})$. Since the $(n,n)$-th position of this matrix is zero, the inequality $C^{n\ast}C^{n}\geq M I_{2n+1}$ implies $M=0$. Hence we cannot deduce that $C$ is proper in the sense of \cite[Definition 3.2]{MT}. Depending on purposes, we will switch conventions.
\section{Observations of representations}
In this section we take the convention in \cite{DCDz1} and \cite{DCDz2}. We consider representations of the quantum $SL(2,\mathbb{R})$. In \cite{DCDz1} the module $\mathcal{M}_{\lambda, b}$ was introduced. De Commer--Dzokou Talla constructed a $\ast$-invariant sesquilinear form on $\mathcal{M}_{\lambda, b}$. They let $\mathcal{N}_{\lambda, b}$ be its kernel and put $\mathcal{L}_{\lambda, b}\coloneqq\mathcal{M}_{\lambda, b}/\mathcal{N}_{\lambda, b}$ to construct and classify the irreducible admissible representations. First we give an easy remark on the module $\mathcal{L}_{\lambda, b}$. 
\begin{prop}
    The module $\mathcal{L}_{\lambda, b}$ is always irreducible.
\end{prop}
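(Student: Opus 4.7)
The plan is to combine three structural features of $\mathcal{L}_{\lambda, b}$: the non-degenerate $\ast$-invariant sesquilinear form it carries by construction (as the quotient of $\mathcal{M}_{\lambda, b}$ by the radical $\mathcal{N}_{\lambda, b}$ of the form), the weight-space decomposition into one-dimensional pieces under a self-adjoint Cartan-type generator, and the cyclicity inherited from $\mathcal{M}_{\lambda, b}$.

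First I would record that $\mathcal{L}_{\lambda, b} = \bigoplus_n V_n$, where each non-zero weight space $V_n$ is one-dimensional (inherited from the weight structure of $\mathcal{M}_{\lambda, b}$). Self-adjointness of the weight-grading element forces distinct weight spaces to be orthogonal for the form, so non-degeneracy on the whole module implies non-degeneracy on each $V_n$ individually. Second, I would locate the canonical cyclic weight generator $v_0 \in V_{m_0}$ of $\mathcal{M}_{\lambda, b}$ from the De Commer--Dzokou Talla construction and verify that $(v_0, v_0) \neq 0$, so that its image in $\mathcal{L}_{\lambda, b}$ is non-zero and still cyclically generates the quotient.

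Third, given an arbitrary submodule $W \subseteq \mathcal{L}_{\lambda, b}$, I would use stability under the Cartan generator to see that $W$ is weight-graded. Since $V_{m_0}$ is one-dimensional, $W \cap V_{m_0}$ is either $0$ or all of $V_{m_0}$, and by $\ast$-invariance $W^{\perp}$ is also a submodule. If $V_{m_0} \subseteq W$, then $v_0 \in W$ and the submodule generated by $v_0$ is all of $\mathcal{L}_{\lambda, b}$, so $W = \mathcal{L}_{\lambda, b}$. Otherwise $V_{m_0} \cap W = 0$, and since $V_{m_0}$ is orthogonal to every $V_n$ with $n \neq m_0$, we get $V_{m_0} \subseteq W^{\perp}$, so $v_0 \in W^{\perp}$; the same cyclicity argument then forces $W^{\perp} = \mathcal{L}_{\lambda, b}$, and non-degeneracy gives $W = 0$.

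The main obstacle is the second step: pinning down a cyclic weight generator $v_0$ of $\mathcal{M}_{\lambda, b}$ and checking that it does not lie in $\mathcal{N}_{\lambda, b}$. The rest of the argument is formal once this is secured. This bookkeeping should be manageable from the explicit description of $\mathcal{M}_{\lambda, b}$ as an induced module, where the cyclic vector is the canonical generator of the inducing character and its self-pairing is (up to normalization) $1$.
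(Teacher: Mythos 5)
Your argument is correct and is essentially the paper's proof in different clothing: the paper pulls an invariant subspace $\mathcal{K}\subset\mathcal{L}_{\lambda,b}$ back to $p^{-1}(\mathcal{K})\supsetneq\mathcal{N}_{\lambda,b}$ inside $\mathcal{M}_{\lambda,b}$ and runs your exact dichotomy there --- either $e_a\in p^{-1}(\mathcal{K})$, whence the shift operators $T_a^{\pm,(n)}$ generate all of $\mathcal{M}_{\lambda,b}$, or $e_a\perp p^{-1}(\mathcal{K})$, whence the adjoint relation $\langle (T_a^{\pm,(n)})^*\xi,e_a\rangle=\langle\xi,e_{a\pm 2n}\rangle=0$ forces $p^{-1}(\mathcal{K})\subseteq\mathcal{N}_{\lambda,b}$. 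The only cosmetic difference is that by working upstairs with the radical instead of downstairs with $W^{\perp}$, the paper never needs your step two ($\langle v_0,v_0\rangle\neq 0$) as a separate input; that step is harmless anyway, since the $\ast$-invariant form on $\mathcal{M}_{\lambda,b}$ is normalized at the cyclic generator.
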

\begin{proof}
   Suppose $\mathcal{K}\subset\mathcal{L}_{\lambda, b}$ is a nonzero invariant subspace. Let $p\colon\mathcal{M}_{\lambda, b}\twoheadrightarrow\mathcal{L}_{\lambda, b}$ be a canonical map. We have $\mathcal{N}_{\lambda, b}\subsetneq p^{-1}(\mathcal{K})\subset\mathcal{M}_{\lambda, b}$. Then we have $e_a\perp p^{-1}(\mathcal{K})$ or $e_a\in p^{-1}(\mathcal{K})$. If the first one holds then we have $0=\langle (T_a^{\pm,(n)})^*\xi,e_a\rangle=\langle\xi,e_{a\pm 2n}\rangle$ for all $\xi\in p^{-1}(\mathcal{K})$ and $n$. This means that $\langle\xi,\eta\rangle=0$ for all $\xi\in p^{-1}(\mathcal{K}),  \eta\in\mathcal{M}_{\lambda, b}$, which means that $p^{-1}(\mathcal{K})=\mathcal{N}_{\lambda, b}$. This is a contradiction. Then we must have $e_a\in p^{-1}(\mathcal{K})$, which implies that 
   $e_{a\pm 2n}\in p^{-1}(K)$. It follows that $p^{-1}(\mathcal{K})=\mathcal{M}_{\lambda, b}$. Hence $\mathcal{K}=\mathcal{L}_{\lambda, b}$.
\end{proof}
Next, let us consider the figures in \cite[p.15, 16]{DCDz1}. In fact these figures respect Fell's topology on the set of (equivalence classes of) $SL(2,\mathbb{R})_t$-admissible irreducible representations. For example, the principal representation $\mathcal{L}_{\lambda}^+$ converges to $\mathds{1}$ and $\mathcal{D}_2^{\pm}$ in Fell's topology. More precisely, we have the following proposition (cf.\ \cite[Proposition 4.10]{G}).
\begin{prop}
  Let $a\in[0,1/2]$ and put $t=\llbracket a\rrbracket$. The topology on the set of (equivalence classes of) $SL(2,\mathbb{R})_t$-admissible irreducible representations is a usual topology in $\mathbb{R}^2$ except the following:
  \begin{itemize}
  \item When $a=0$,
  \begin{enumerate}
      \item $\mathcal{L}_{\lambda}^{+}\to\mathds{1}, \mathcal{D}_{2}^{\pm}$ as $\lambda\to q+q^{-1}$,
      \item $\mathcal{L}_{\lambda}^{+}\to\mathcal{C}, \mathcal{E}_{2}^{\pm}$ as $\lambda\to -q-q^{-1}$,
      \item $\mathcal{L}_{\lambda}^{-}\to\mathcal{D}_{1}^{\pm}$ as $\lambda\to 2$,
      \item $\mathcal{L}_{\lambda}^{-}\to\mathcal{E}_{1}^{\pm}$ as $\lambda\to -2$.
  \end{enumerate}
  \item When $a=1/2$,
  \begin{enumerate}
      \item $\mathcal{L}_{\lambda}^{+}\to\mathds{1}, \mathcal{D}_{2}^{\pm}$ as $\lambda\to q+q^{-1}$,
      \item $\mathcal{L}_{\lambda}^{+}\to\mathcal{E}_{2}^{\pm}$ as $\lambda\to -2$,
      \item $\mathcal{L}_{\lambda}^{-}\to\mathcal{D}_{1}^{\pm}$ as $\lambda\to 2$,
      \item $\mathcal{L}_{\lambda}^{-}\to\mathcal{C}, \mathcal{E}_{1}^{\pm}$ as $\lambda\to -q-q^{-1}$.
  \end{enumerate}
  \item When $0<a<1/2$,
  \begin{enumerate}
    \item $\mathcal{L}_{\lambda}^{+}\to\mathds{1}, \mathcal{D}_{2}^{\pm}$ as $\lambda\to q+q^{-1}$,
      \item $\mathcal{L}_{\lambda}^{+}\to\mathcal{E}_{2}^{\pm}$ as $\lambda\to -q^{2a-1}-q^{-2a+1}$,
      \item $\mathcal{L}_{\lambda}^{-}\to\mathcal{D}_{1}^{\pm}$ as $\lambda\to 2$,
      \item $\mathcal{L}_{\lambda}^{-}\to\mathcal{E}_{1}^{\pm}$ as $\lambda\to -q^{2a}-q^{-2a}$.  
  \end{enumerate}
  \end{itemize}
\end{prop}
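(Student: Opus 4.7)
The plan is to work with the explicit realizations from \cite{DCDz1}: every admissible irreducible is modeled on a weighted $\ell^{2}$-type space with basis $\{e_{a+2n}\}$ on which the generators $T_{a}^{\pm,(n)}$ and the Cartan-like element act tridiagonally, with matrix entries that are rational functions of the continuous parameter $\lambda$. Since Fell's topology on the admissible dual is determined by pointwise convergence of positive-definite matrix coefficients on a cyclic vector, the entire proposition reduces to a family of limit statements for such coefficients.

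First I would establish the ``usual topology'' claim by fixing the cyclic vector $e_{a}$ and the diagonal matrix coefficient $\varphi_{\lambda}(x)\coloneqq\langle e_{a}, x\cdot e_{a}\rangle$. Because the tridiagonal action is rational in $\lambda$, induction on the word length in the generators shows that $\varphi_{\lambda}(x)$ depends continuously on $\lambda$ throughout the interior of the principal-, discrete-, complementary-, and exceptional-series parameter regions. By the standard characterization of Fell's topology via pointwise limits of positive definite functionals on a dense $\ast$-subalgebra, the induced topology on the non-degenerate part of the dual is the usual Euclidean one; this takes care of all the claims not explicitly listed as exceptional.

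Next, for the boundary convergences I would analyze the degeneration of the $\ast$-invariant sesquilinear form on $\mathcal{M}_{\lambda,b}$ at each critical value $\lambda_{0}$. At these values the radical $\mathcal{N}_{\lambda,b}$ enlarges and the quotient $\mathcal{L}_{\lambda,b}$ acquires new invariant subspaces. The limit $\varphi_{0}\coloneqq \lim_{\lambda\to\lambda_{0}}\varphi_{\lambda}$ is a well-defined positive-definite functional on the algebra, and I would compute its GNS decomposition: it splits as the direct sum of precisely the irreducibles listed in the proposition (for instance, $\mathds{1}\oplus\mathcal{D}_{2}^{+}\oplus\mathcal{D}_{2}^{-}$ at $\lambda\to q+q^{-1}$). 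A standard weak-containment argument then identifies each summand as a Fell limit of the principal series, yielding the listed convergences. Together with continuity on the interior, this also shows the listed limits are the \emph{only} new limits.

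The main obstacle will be the case-by-case combinatorial verification that the degeneration at each critical value produces exactly the claimed irreducibles and no extraneous ones. The special coincidences at $a=0$ and $a=1/2$---which bring in the complementary series $\mathcal{C}$ and exchange the roles of $\mathcal{D}_{1}^{\pm}$ and $\mathcal{E}_{1}^{\pm}$---must be tracked by computing the signature of the degenerating sesquilinear form, in parallel with the classical calculation in \cite[Proposition 4.10]{G}. The only genuinely new ingredient compared with the classical $SL(2,\mathbb{R})$ situation is that the Podle\'{s} parameter $t=\llbracket a\rrbracket$ enters the tridiagonal formulas and shifts the critical values of $\lambda$ to the non-standard positions $\pm(q^{2a-1}+q^{-2a+1})$ and $\pm(q^{2a}+q^{-2a})$; handling these three parameter regimes for $a$ separately is what produces the three-fold case split in the statement.
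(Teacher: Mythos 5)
Your overall strategy---realizing each irreducible on the weight basis $\{e_{a+2n}\}$, reading Fell convergence off pointwise limits of positive-definite matrix coefficients that are rational in $\lambda$, and locating the exceptional limits at the critical values where the invariant form on $\mathcal{M}_{\lambda,b}$ degenerates---is the same as the paper's. The paper, however, only carries out one of the listed limits explicitly: the computation behind Theorem \ref{gen} (via Lemma \ref{wt} and the evaluation $P_n(1)=1$ of the normalized Askey--Wilson polynomials) shows that $\omega_{\lambda}=\langle e_a^{\lambda},\pi_{\lambda}(\bullet)e_a^{\lambda}\rangle\to\epsilon$ as $\lambda\to q+q^{-1}$, i.e.\ $\mathcal{L}_{\lambda}^{+}\to\mathds{1}$, and then asserts the remaining cases are ``similar.'' Your proposal is therefore more systematic than what is actually written, and the case split over $a=0$, $a=1/2$, $0<a<1/2$ is correctly motivated.

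There is one concrete error in your plan. The limit $\varphi_0=\lim_{\lambda\to q+q^{-1}}\varphi_{\lambda}$ of the diagonal coefficient at the spherical vector $e_a$ is exactly $\epsilon$, so its GNS representation is $\mathds{1}$ alone; it does \emph{not} decompose as $\mathds{1}\oplus\mathcal{D}_2^{+}\oplus\mathcal{D}_2^{-}$. The weight-$[a]$ vector cannot see the discrete series at all, since $\mathcal{D}_2^{\pm}$ contain no vector of weight $[a]$ (they are spanned by the $e_{a+2n}$ with $n>0$, resp.\ $n<0$). To witness $\mathcal{L}_{\lambda}^{+}\to\mathcal{D}_2^{\pm}$ you must take the diagonal coefficients at $e_{a\pm 2}^{\lambda}$ (or any $e_{a+2n}^{\lambda}$ with $n\gtrless 0$) and check that they converge to the corresponding coefficients of $\mathcal{D}_2^{\pm}$ sitting inside the degenerate module $\mathcal{M}_{q+q^{-1},a}$; this works because at the critical value $\mathcal{D}_2^{+}\oplus\mathcal{D}_2^{-}$ becomes a genuine submodule while the extension by $\mathds{1}$ fails to split only in the other direction (Proposition \ref{nslem}), so those coefficients are honest positive-definite coefficients of the discrete series. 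The same correction is needed at each of the other critical values, where the relevant limits are coefficients of $\mathcal{E}_i^{\pm}$, $\mathcal{D}_1^{\pm}$ or $\mathcal{C}$ at suitably chosen weight vectors. Relatedly, continuity of the single coefficient at $e_a$ does not by itself show that the listed limits are the \emph{only} new ones; for that you need to control the coefficients at all weight vectors (equivalently, the full degenerating module), as in G\"unther's argument for classical $SL(2,\mathbb{R})$.
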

Note that the computation in the proof of \ref{gen} proves that the principal series representation converges to the trivial representation. In a similar way, we can prove the proposition above.\par
Let us prove the correspondence theorem stated before Lemma 2.13 of \cite{MT}.
\begin{prop}Let $\mathcal{H}$ be a preHilbert space.
There exists a one to one correspondence between
\begin{enumerate}
    \item Admissible $\ast$-representation of $U_q(\mathfrak{sl}(2,\mathbb{R})_t)$ on $\mathcal{H}$
    \item Unital continuous $\ast$-representation of $\mathcal{O}_q(S_t^2)\bowtie\mathcal{I}$ on $\mathcal{H}$, where $\Phi_n\mathcal{H}$ is finite dimensional for each $n$.
    \item Non-degenerate  $\ast$-representation of $\mathcal{O}_q(S_t^2)\bowtie\mathcal{I}_c$ on $\mathcal{H}$, where $\Phi_n\mathcal{H}$ is finite dimensional for each $n$.
\end{enumerate}
\end{prop}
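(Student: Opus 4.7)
My plan is to establish the equivalences along the cycle $(1) \Rightarrow (2) \Rightarrow (3) \Rightarrow (1)$ and to verify that each round trip is the identity. The conceptual point is that admissibility produces a canonical joint spectral decomposition along the coideal $I = \mathbb{C}[B_t]$, and this is precisely the data needed to promote an $I$-action to an action of the full completion $\mathcal{I} \simeq \prod_n \mathbb{B}(\mathbb{C}_n)$; the equivalence $(2) \Leftrightarrow (3)$ is then the standard unital-versus-non-degenerate dichotomy between $\mathcal{I}$ and its ``compactly supported'' subalgebra $\mathcal{I}_c$.

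For $(1) \Rightarrow (2)$ I would start with an admissible $\ast$-representation $\pi$ of $U_q(\mathfrak{sl}(2,\mathbb{R})_t) = \mathcal{O}_q(S_t^2) \bowtie I$ on $\mathcal{H}$ and write $\mathcal{H} = \bigoplus_{n \in \mathbb{Z}} \mathcal{H}_n$ algebraically, where $\mathcal{H}_n$ is the finite-dimensional eigenspace of $\sqrt{-1} B_t$ with eigenvalue $[a+n]$. Letting $\Phi_n$ be the orthogonal projection onto $\mathcal{H}_n$, an element $f = (f_n)_n \in \mathcal{I}$ is made to act by the pointwise-finite sum $\sum_n f_n \Phi_n$; unitality $\sum_n \Phi_n = \id$ is exactly admissibility, and the mixed commutation relations in $\mathcal{O}_q(S_t^2) \bowtie \mathcal{I}$ reduce to the ones already verified inside $\mathcal{O}_q(S_t^2) \bowtie I$ after spectral evaluation. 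The inverse $(2) \Rightarrow (1)$ is then the restriction along $I \hookrightarrow \mathcal{I}$, and admissibility of the restriction follows from the finite-dimensionality hypothesis on $\Phi_n \mathcal{H}$ together with $\sum_n \Phi_n = \id$.

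For $(2) \Leftrightarrow (3)$ I would, in one direction, simply restrict a unital continuous rep of $\mathcal{O}_q(S_t^2) \bowtie \mathcal{I}$ to the non-unital subalgebra $\mathcal{O}_q(S_t^2) \bowtie \mathcal{I}_c$; non-degeneracy is automatic since each $\Phi_n$ lies in $\mathcal{I}_c$ and $\sum_n \Phi_n \xi = \xi$ for every $\xi \in \mathcal{H}$. Conversely, starting from a non-degenerate rep of $\mathcal{O}_q(S_t^2) \bowtie \mathcal{I}_c$ with finite-dimensional $\Phi_n \mathcal{H}$, the mutual orthogonality of the $\Phi_n$'s together with non-degeneracy yields $\mathcal{H} = \bigoplus_n \Phi_n \mathcal{H}$, and any $(f_n)_n \in \mathcal{I}$ acts by the same pointwise-finite sum $\sum_n f_n \Phi_n$ to give the desired unital continuous extension.

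The main obstacle is not analytic---the sums $\sum_n f_n \Phi_n$ are pointwise-finite on $\mathcal{H}$, so convergence is automatic---but rather the bookkeeping needed to check that the spectral extension of the $I$-action is compatible with the smash product multiplication of $\mathcal{O}_q(S_t^2) \bowtie \mathcal{I}$. Concretely, one must verify the mixed relation between $\mathcal{O}_q(S_t^2)$ and $\mathcal{I}$; this reduces to the observation that any $a \in \mathcal{O}_q(SU(2))_s$ shifts the spectral index by a bounded amount, so the relation evaluated on $\Phi_n \mathcal{H}$ only ever couples finitely many components of any $f \in \mathcal{I}$ and therefore already lives inside the algebra $\mathcal{O}_q(S_t^2) \bowtie I$ whose relations are known.
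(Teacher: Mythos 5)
Your proposal follows essentially the same route as the paper's (very terse) proof: restriction gives $(2)\Rightarrow(1)$ and $(2)\Rightarrow(3)$, and the reverse directions are obtained by spectrally decomposing $\mathcal{H}$ along the projections $\Phi_n$ and letting $\mathcal{I}$ act by pointwise-finite sums. Your write-up is in fact more careful than the paper's, notably in flagging and addressing the compatibility of the extended $\mathcal{I}$-action with the smash-product relations, which the paper leaves implicit.
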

\begin{proof}
We denote representations in 1, 2, 3 by $\pi_1, \pi_2, \pi_3$ respectively. By restricting, we get $\pi_1$ from $\pi_2$. In a similar way, we get $\pi_3$ from $\pi_2$. By letting $\pi_2(\sum b_n\Phi_n)\Phi_m\xi\coloneqq\pi_3(b_m\Phi_m)\xi$, where $\xi\in\mathcal{H}$ and $m\in\mathbb{Z}$, we get $\pi_2$ from $\pi_3$. By letting $\pi_1(\sqrt{-1}B_t)\Phi_m\xi\coloneqq[a+m]\Phi_m\xi$, where $\xi\in\mathcal{H}$ and $m\in\mathbb{Z}$. 
\end{proof}
In \cite{DCDz2}, the $k^{-1}$-invariant integral $\varphi_t$ on $SL_q(2,\mathbb{R})$ and the regular representation of $SL_q(2,\mathbb{R})$ were constructed. We show that they are faithful on $C_c(SL_q(2,\mathbb{R})_t)$.
\begin{prop}
The integral $\varphi_t$ is faithful on $C_c(SL_q(2,\mathbb{R})_t)$.
\end{prop}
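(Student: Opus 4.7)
The strategy is to prove faithfulness by expanding an arbitrary element of $C_c(SL_q(2,\mathbb{R})_t)$ in an explicit linear basis and showing that $\varphi_t(x^*x)$ is a non-negative sum of squared moduli of coefficients with strictly positive weights. By the matched-pair decomposition $C_c(SL_q(2,\mathbb{R})_t)_{i,j}=\mathcal{O}_q(S_t^2)_i\bowtie\mathcal{I}_{c,j}$, together with the basis $\{u^n_{[a+2i],[a]}\}$ of the Podle\'s sphere recalled before Lemma \ref{AW} and the orthogonal minimal projections $\{\Phi_m\}_{m\in\mathbb{Z}}$ of $\mathcal{I}_c$, every element admits a unique finite expansion
\[
x \;=\; \sum_{n,i,m} c_{i,n,m}\, u^n_{[a+2i],[a]}\,\Phi_m .
\]

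Next, I would import the explicit description of $\varphi_t$ from \cite{DCDz2}: it is built as a weighted combination of the invariant integral on $\mathcal{O}_q(S_t^2)$ (which on $\mathcal{O}_q(SU(2))$ is the Haar state, hence the Peter--Weyl orthogonality relations hold for the $u^n_{i,j}$) with the natural positive functional on $\mathcal{I}_c$ corresponding to the system of minimal projections $\Phi_m$. The $k^{-1}$-invariance is precisely what makes this pairing behave well with respect to the twist inside the bicrossproduct, so that the sesquilinear form $(x,y)\mapsto \varphi_t(x^*y)$ inherits a block-diagonal structure: matrix coefficients $u^n_{[a+2i],[a]}$ for distinct $(n,i)$ are orthogonal, and distinct $\Phi_m$ are orthogonal.

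Combining these, the cross terms in $\varphi_t(x^*x)$ vanish by the Peter--Weyl orthogonality and by $\Phi_m \Phi_{m'}=\delta_{m,m'}\Phi_m$, yielding
\[
\varphi_t(x^*x) \;=\; \sum_{n,i,m} \kappa_{n,i,m}\,|c_{i,n,m}|^2,
\]
with strictly positive weights $\kappa_{n,i,m}$ coming from the norms of matrix coefficients under the Haar state and the positive scalars assigned to the $\Phi_m$. Faithfulness follows immediately: $\varphi_t(x^*x)=0$ forces every $c_{i,n,m}$ to vanish, i.e.\ $x=0$.

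The main obstacle is the cross-term analysis: the bicrossproduct multiplication rule mixes the two factors, so after expanding $x^*x$ one first has to commute all $\Phi_m$'s past matrix coefficients using the explicit commutation relations of $\mathcal{O}_q(S_t^2)\bowtie\mathcal{I}_c$ before the orthogonality of matrix coefficients and projections can be applied cleanly. Verifying that after this reshuffling only the diagonal terms survive --- and that the resulting weights are manifestly positive --- is where the $k^{-1}$-invariance of $\varphi_t$ is essential and where the bulk of the bookkeeping lies.
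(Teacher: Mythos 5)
Your overall strategy --- reduce $\varphi_t(x^*x)$ to a positive-definite quadratic form in the coefficients of a basis expansion and invoke faithfulness of the Haar state --- is sound, but the paper's proof is shorter and sidesteps both of the bookkeeping problems you flag. First, the paper writes $x=\sum_n\Phi_n b_n$ with the projections on the \emph{left}, so that $x^*x=\sum_{n,m}b_n^*\Phi_n\Phi_m b_m=\sum_n b_n^*\Phi_n b_n$ and all cross terms in the $\mathcal{I}_c$ variable vanish before any commutation of projections past matrix coefficients is needed; your normal form $\sum c\,u^n_{[a+2i],[a]}\Phi_m$ puts the projections on the side that forces exactly the reshuffling you describe as the main obstacle. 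Second, instead of expanding the $b_n$ in a Peter--Weyl basis, the paper uses the trace-like property coming from the $k^{-1}$-invariance of $\varphi_t$ to write $\varphi_t(b_n^*\Phi_n b_n)=\varphi_t(\Phi_n b_nb_n^*)=\frac{q^{a+n}+q^{-a-n}}{q^{a}+q^{-a}}\Phi(b_nb_n^*)$, and then quotes faithfulness of the Haar state $\Phi$ directly --- no orthogonality relations are used at all.

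One intermediate claim in your sketch is false as stated: the matrix coefficients $u^n_{[a+2i],[a]}$ for distinct $i$ with the same $n$ are \emph{not} orthogonal under the Haar state. The Schur orthogonality relations on $\mathcal{O}_q(SU(2))$ carry a modular twist, $\Phi\bigl((u^n_{\xi,\eta})^*u^n_{\xi',\eta'}\bigr)$ being proportional to $\langle\xi',Q_n^{-1}\xi\rangle\langle\eta,\eta'\rangle$ with $Q_n=\pi_n(k^{\pm1})$, and the vectors $\eta^n_{[a+2i]}$ are eigenvectors of $\sqrt{-1}R(B_t)$, not of $k$, so the Gram matrix within a fixed $n$ is positive definite but not diagonal. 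Your conclusion survives because positive definiteness is all that faithfulness requires, but the claimed diagonal structure would have to be weakened accordingly, and the remaining cross-term analysis between distinct $m$ would still have to be carried out explicitly before the argument is complete.
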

\begin{proof}
Let $x\in\mathcal{O}_q(S_t^2)\bowtie\mathcal{I}_c$ be given and write $x=\sum_{n}\Phi_{n}b_n$, where $b_n\in\mathcal{O}_q(S_t^2)$. If $\varphi_t(x^{\ast}x)=0$, then 
\begin{align*}
    0&=\varphi_t((\displaystyle\sum_{n}\Phi_{n}b_n)^{\ast}\displaystyle\sum_{m}\Phi_{m}b_m)\\
    &=\varphi_t(\displaystyle\sum_{n,m}b^{\ast}_n\Phi_{n}\Phi_{m}b_m)\\
    &=\displaystyle\sum_{n}\varphi_t(b^{\ast}_{n}\Phi_{n}b_n)\\
    &=\displaystyle\sum_{n}\varphi_t(\Phi_{n}b_{n}b^{\ast}_{n})\\
    &=\displaystyle\sum_{n}\dfrac{q^{a+n}+q^{-a-n}}{q^{a}+q^{-a}}\Phi(b_{n}b^{\ast}_{n}).
\end{align*}
Thus $\dfrac{q^{a+n}+q^{-a-n}}{q^{a}+q^{-a}}\Phi(b_{n}b^{\ast}_{n})=0$ for each $n$. Since the Haar state $\Phi$ is faithful, it follows that $b_n=0$ for all $n$. This means that $x=0$. Therefore $\varphi_t$ is faithful.
\end{proof}
\begin{prop}\label{fflrep}
The regular representation $\pi_{\text{reg}}=(\pi_{\text{reg}},H_{\text{reg}})$ is faithful on $C_c(SL_q(2,\mathbb{R})_t)$.
\end{prop}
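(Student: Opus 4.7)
The plan is to realize $\pi_{\text{reg}}$ as the GNS representation associated to the faithful integral $\varphi_t$, and then combine this identification with a local-units argument. By the construction in \cite{DCDz2}, the regular representation acts by left multiplication on the completion $H_{\text{reg}}$ of $C_c(SL_q(2,\mathbb{R})_t)$ with respect to the inner product $\langle y, z\rangle = \varphi_t(y^*z)$. Writing $\Lambda\colon C_c(SL_q(2,\mathbb{R})_t) \to H_{\text{reg}}$ for the GNS map, we have $\pi_{\text{reg}}(x)\Lambda(y) = \Lambda(xy)$; moreover, since the preceding proposition shows that $\varphi_t$ is faithful, $\Lambda$ is injective.

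Suppose now that $\pi_{\text{reg}}(x)=0$. Applying this operator to $\Lambda(y)$ gives $\Lambda(xy)=0$ for every $y \in C_c(SL_q(2,\mathbb{R})_t)$, and injectivity of $\Lambda$ then forces $xy=0$ for every such $y$. It remains to conclude $x=0$ from the fact that $x$ right-annihilates all of $C_c(SL_q(2,\mathbb{R})_t)$.

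For this I would exhibit a right local unit for $x$ inside $\mathcal{I}_c$. Write $x = \sum_{m} \Phi_m b_m$ as a finite sum with $b_m \in \mathcal{O}_q(S_t^2)$, and further decompose each $b_m = \sum_k b_{m,k}$ into its weight-homogeneous components under the grading induced by $k$ (a finite sum because $b_m$ is finite-dimensionally supported). The cross-commutation relations of the bowtie product $\mathcal{O}_q(S_t^2) \bowtie \mathcal{I}_c$ allow one to move a $\Phi_n$ past a weight-$k$ element at the cost of a shift $n \mapsto n + k$. Setting $p_N = \sum_{|n|\le N}\Phi_n \in \mathcal{I}_c$, one verifies that for $N$ large enough so that all shifted indices remain in $[-N,N]$, the identity $xp_N = x$ holds. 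Taking $y=p_N$ in the previous step then gives $x = xp_N = 0$, finishing the proof.

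The main obstacle is the local-units step: one has to track the bowtie cross-relations carefully to confirm that $p_N$ acts as a right identity on the given $x$. Once this is established, faithfulness of $\pi_{\text{reg}}$ reduces cleanly to the faithfulness of $\varphi_t$ shown in the previous proposition, via the standard GNS identification $\pi_{\text{reg}}(x)\Lambda(y) = \Lambda(xy)$.
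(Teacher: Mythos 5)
Your proof is correct and follows essentially the same route as the paper: both reduce faithfulness of $\pi_{\text{reg}}$ to faithfulness of $\varphi_t$ by testing $x$ against the orthogonal idempotents $\Phi_m$ (the paper evaluates $\pi_{\text{reg}}(x)$ on the vectors $\Lambda_{\mathcal{O}_q(S_t^2)}(1)\otimes\Lambda_{\mathcal{I}_c}(\Phi_m)$, which is exactly your local-unit step carried out one $\Phi_m$ at a time). The weight-component bookkeeping in your $xp_N=x$ step is unnecessary if you write $x$ in the standard form $\sum_n b_n\Phi_n$ with the $\mathcal{I}_c$-factor on the right, since then $\Phi_n\Phi_m=\delta_{n,m}\Phi_n$ gives the identity immediately.
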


\begin{proof}
Let $x\in\mathcal{O}_q(S_t^2)\bowtie\mathcal{I}_c$ satisfy $\pi_{\text{reg}}(x)=0$. Write $x=\sum_{n}b_n\Phi_{n}$, where the $b_n$'s are elements in $\mathcal{O}_q(S_t^2)$. Then we have
\begin{align*}
    \pi_{\text{reg}}(x)(\Lambda_{\mathcal{O}_q(S_t^2)}(b)\otimes\Lambda_{\mathcal{I}_c}(y))=0
\end{align*}
for all $b\in\mathcal{O}_q(S_t^2)$ and $y\in\mathcal{I}_c$. In particular letting $b=1$ and $y=\Phi_m$ we have
\begin{align*}
    0=&\pi_{\text{reg}}(\sum_{n}b_n\Phi_{n})(\Lambda_{\mathcal{O}_q(S_t^2)}(1)\otimes\Lambda_{\mathcal{I}_c}(\Phi_m))\\
    =&\sum_{n}\pi_{\text{reg}}(b_n)(\Lambda_{\mathcal{O}_q(S_t^2)}(1)\otimes\Lambda_{\mathcal{I}_c}(\Phi_n\Phi_m))\\
    =&\Lambda_{\mathcal{O}_q(S_t^2)}(b_m)\otimes\Lambda_{\mathcal{I}_c}(\Phi_m)
\end{align*}
for all $m$. Thus $\varphi_t((b_m\Phi_m)^{\ast}b_m\Phi_m)=0$ for all $m$. Since the functional $\varphi_t$ is faithful, $b_m\Phi_m=0$ for each $m$. Hence $x=0$. This proves the proposition.
\end{proof}
Let us consider the category of nondegenerate $\ast$-representations of the convolution algebra $C_c(SL_q(2,\mathbb{R})_t)$. We can find the unit object in this category.
\begin{prop}
The trivial representation is the unit object in the representation category of $C_c(SL_q(2,\mathbb{R})_t)$.
\end{prop}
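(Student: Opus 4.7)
The plan is to exhibit the trivial representation $\mathds{1}$ as the unit for the tensor product on nondegenerate $\ast$-representations of $C_c(SL_q(2,\mathbb{R})_t)$ constructed in \cite{DCDz2}, by writing down natural isomorphisms $\ell_V\colon\mathds{1}\otimes V\to V$ and $r_V\colon V\otimes\mathds{1}\to V$ and checking they intertwine the actions.

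First I would identify $\mathds{1}$ concretely on the convolution-algebra side. Under the correspondence of the preceding proposition, the trivial admissible $\ast$-representation of $U_q(\mathfrak{sl}(2,\mathbb{R})_t)$ on a line $\mathbb{C}e$, with $\sqrt{-1}B_t$ acting by $[a]$, is the representation of $\mathcal{O}_q(S_t^2)\bowtie\mathcal{I}_c$ where $b\in\mathcal{O}_q(S_t^2)$ acts by $\epsilon(b)$ and each $\Phi_n\in\mathcal{I}_c$ acts as $\delta_{n,0}\id$. Since $\Phi_0$ acts as the identity this representation is nondegenerate, hence it is really an object of the category under consideration.

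Next I would take the obvious candidates $\ell_V(e\otimes v)\coloneqq v$ and $r_V(v\otimes e)\coloneqq v$ and verify $C_c(SL_q(2,\mathbb{R})_t)$-linearity. Expanding the tensor product action from \cite{DCDz2}, the action of $b\in\mathcal{O}_q(S_t^2)$ on $\mathds{1}\otimes V$ collapses to the original action on $V$ via the counital identities $\epsilon(b_{(1)})b_{(2)}=b=b_{(1)}\epsilon(b_{(2)})$ valid on the coideal $\mathcal{O}_q(S_t^2)\subset\mathcal{O}_q(SU(2))$; the $\mathcal{I}_c$-part collapses analogously, since only the $n=0$ component of the relevant coproduct survives when paired against $\mathds{1}$. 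Bijectivity of $\ell_V$ and $r_V$ is then immediate from $\dim\mathds{1}=1$, and naturality in $V$ is clear.

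The main obstacle is purely bookkeeping: one has to unfold the precise formulas of \cite{DCDz2} for the tensor product of two admissible representations and verify the counit-like collapse on both the coideal piece $\mathcal{O}_q(S_t^2)$ and the Cartan piece $\mathcal{I}_c$, keeping track of the twisting between the two factors in the bicrossed product. Once this is in place, the triangle identity with the associator is automatic from the counit axioms already built into the monoidal structure constructed in \cite{DCDz2}, so $\mathds{1}$ together with the constraints $\ell_V,r_V$ is the unit object in the representation category of $C_c(SL_q(2,\mathbb{R})_t)$.
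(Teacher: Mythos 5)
Your identification of the trivial representation ($b$ acting by $\epsilon(b)$, $\Phi_n$ by $\delta_{n,0}\,\id$, nondegenerate because $\Phi_0$ acts as the identity) is fine and matches what the paper uses. The gap is in the main step: you treat the monoidal product as if its underlying space were the vector-space tensor product $\mathds{1}\otimes V$ (resp.\ $V\otimes\mathds{1}$) with the algebra acting through a coproduct, and then invoke the counit identities $\epsilon(b_{(1)})b_{(2)}=b=b_{(1)}\epsilon(b_{(2)})$. But $\mathcal{O}_q(S_t^2)\bowtie\mathcal{I}_c$ is not a bialgebra and has no coproduct landing in its own tensor square; the monoidal structure of \cite{DCDz2} is instead built from a cotensor product over $\mathcal{U}_{\infty}$ followed by a balanced tensor product over $\mathcal{I}_{\infty}$, so that $\mathcal{H}\boxtimes\mathds{1}=(\mathcal{U}_{\infty}\compconj{\Box}\mathds{1})\compconj{\tens_{\mathcal{I}_{\infty}}}\mathcal{H}$. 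Consequently your candidate constraints $\ell_V(e\otimes v)=v$ and $r_V(v\otimes e)=v$ do not have the right domain: there is no element ``$v\otimes e$'' of $\mathcal{H}\boxtimes\mathds{1}$ to send anywhere, and no counit of $C_c(SL_q(2,\mathbb{R})_t)$ to collapse with. This is not bookkeeping that can be deferred; it is where the content of the statement lives.

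The step your proposal is missing is exactly the computation the paper performs: one shows
\[
\mathcal{U}_{\infty}\compconj{\Box}\mathds{1}=\{z\in\mathcal{U}_{\infty}\mid z\lhd b=\epsilon(b)z\ \text{for all }b\in\mathcal{O}_q(S_t^2)\}=\mathcal{O}_q(S_t^2)^{\perp}=\mathcal{I}_{\infty},
\]
i.e.\ the cotensor product of $\mathcal{U}_{\infty}$ with the trivial representation is the orthogonal coideal $\mathcal{I}_{\infty}$ itself. Once this duality is in hand, the balanced tensor product trivializes, $\mathcal{H}\boxtimes\mathds{1}=\mathcal{I}_{\infty}\compconj{\tens_{\mathcal{I}_{\infty}}}\mathcal{H}\simeq\mathcal{H}$, and the other side $\mathds{1}\boxtimes\mathcal{H}\simeq(\mathcal{U}_{\infty}\compconj{\Box}\mathcal{H})\Phi_{\mathcal{C}}=\mathcal{H}$ follows using the cutdown by $\Phi_{\mathcal{C}}$. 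Your ``counit-like collapse'' is the bialgebra shadow of this argument, and your instinct that the unit property should reduce to the counit axioms is sound, but to repair the proof you must replace the naive tensor product with the actual $\compconj{\Box}$ and $\compconj{\tens_{\mathcal{I}_{\infty}}}$ construction and prove the identification $\mathcal{U}_{\infty}\compconj{\Box}\mathds{1}=\mathcal{I}_{\infty}$ explicitly.
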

\begin{proof}
 Supoose $\mathcal{H}$ be a nondegenerate $\ast$-representation of $C_c(SL_q(2,\mathbb{R})_t)$. Note that $\mathcal{O}_q(S_t^2)^{\perp}\coloneqq\{x\in\mathcal{U}_{\infty}\mid x\LHD b=\epsilon(b)x \hspace{1mm}\text{for all}\hspace{1mm}b\in\mathcal{O}_q(S_t^2)\}=\mathcal{I}_{\infty}$. We have
  \begin{align*}
\mathcal{U}_{\infty}\compconj{\Box}\mathds{1}&=\{z\in\mathcal{U}_{\infty}\compconj{\otimes}\mathds{1}\mid(1\otimes\epsilon(b))z(\LHD b\otimes\id)\hspace{1mm}\text{for all}\hspace{1mm}b\in\mathcal{O}_q(S_t^2)\}\\
&=\{z\in\mathcal{U}_{\infty}\mid\epsilon(b)z=z\LHD b \hspace{1mm}\text{for all}\hspace{1mm}b\in\mathcal{O}_q(S_t^2)\}\\
&=\mathcal{O}_q(S_t^2)^{\perp}\\
&=\mathcal{I}_{\infty}.
  \end{align*}
  Thus we have
  \[
\mathcal{H}\boxtimes\mathds{1}=(\mathcal{U}_{\infty}\compconj{\Box}\mathds{1})\compconj{\tens_{\mathcal{I}_{\infty}}}\mathcal{H}\simeq\mathcal{H}.
\]
We also have 
\[
\mathds{1}\boxtimes\mathcal{H}\simeq((\mathcal{U}_{\infty}\compconj{\Box}\mathcal{H})\compconj{\tens_{\mathcal{I}_{\infty}}}(\mathcal{U}_{\infty}\compconj{\Box}\mathds{1}))\Phi_{\mathcal{C}}=(\mathcal{U}_{\infty}\compconj{\Box}\mathcal{H})\Phi_{\mathcal{C}}=\mathcal{H}.
\]
\end{proof}
We give proofs of the following two propositions which are stated in \cite{MT}.
\begin{prop}
 There exists an isomorphism  $\mathcal{M}_{q+q^{-1},a} \simeq (\mathcal{D}_{2}^{+} \oplus \mathcal{D}_{2}^{-}) \oplus \mathds{1}$ of $\mathcal{I}_c$-representations.
\end{prop}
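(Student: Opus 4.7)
The plan is to reduce the claim to a check of $\mathcal{I}_c$-weight multiplicities. Since $\mathcal{I}_c=\bigoplus_{n\in\mathbb{Z}}\mathbb{B}(\mathbb{C}_n)$ is a direct sum of full (one-dimensional) matrix algebras, any $\ast$-representation of $\mathcal{I}_c$ with finite dimensional spectral subspaces $\Phi_n\mathcal{H}$ decomposes canonically into isotypical components indexed by $n\in\mathbb{Z}$, and two such representations are isomorphic as $\mathcal{I}_c$-representations if and only if they have the same weight-multiplicity function $n\mapsto\dim\Phi_n\mathcal{H}$. This is precisely the setting guaranteed by the correspondence proposition above.

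First I would recall from \cite{DCDz1} the standard basis $\{e_{a+2n}\}_{n\in\mathbb{Z}}$ of $\mathcal{M}_{q+q^{-1},a}$: each $e_{a+2n}$ is an eigenvector for $\sqrt{-1}B_t$ with eigenvalue $[a+2n]$, so $\Phi_n\mathcal{M}_{q+q^{-1},a}=\mathbb{C}e_{a+2n}$. Hence the weight-multiplicity function of the left-hand side equals $1$ at every integer $n$.

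Next, for the right-hand side, I would invoke the explicit description of the relevant admissible irreducibles in \cite[Theorem 3.17, Remark 3.18]{DCDz1} to read off their weight supports: $\mathcal{D}_2^+$ contributes multiplicity one at each $n\geq 1$, $\mathcal{D}_2^-$ contributes multiplicity one at each $n\leq -1$, and $\mathds{1}$ contributes multiplicity one at $n=0$. Summing, the right-hand side also has weight-multiplicity function identically $1$ on $\mathbb{Z}$, so the two sides match. The isomorphism is then realized concretely by sending each one-dimensional weight space on the right to the unique corresponding line $\mathbb{C}e_{a+2n}$ on the left; $\mathcal{I}_c$-equivariance is automatic because both sides are diagonal in the weight decomposition.

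The main obstacle I anticipate is the boundary bookkeeping at $n=0$: one must confirm, from the explicit construction of $\mathcal{D}_2^\pm$ at the critical value $\lambda=q+q^{-1}$, that their weight supports genuinely start at $n=\pm 1$ and not at $n=0$, so that the $n=0$ slot is filled exactly once by $\mathds{1}$ with neither gap nor doubling. This is convention-dependent and is what makes the decomposition "three-piece" rather than either $\mathcal{D}_2^+\oplus\mathcal{D}_2^-$ alone or with an extra defect; once the indexing at the edges is pinned down against \cite{DCDz1}, the rest of the argument is immediate.
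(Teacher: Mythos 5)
Your proposal is correct and is essentially the paper's own argument: the paper simply writes down the explicit weight-basis map $\Psi(\sum_{n\neq0}b_ne_{a+2n}+b_0e_a)=\sum_{n\neq0}b_ne_{a+2n}+b_0e_a$ and checks that each $\Phi_n$ acts diagonally with the same coefficient on both sides, which is exactly the ``concrete realization'' you describe after your multiplicity count. Your repackaging via the weight-multiplicity function $n\mapsto\dim\Phi_n\mathcal{H}$ is a harmless abstraction of the same diagonal-action check, and your care about the $n=0$ slot matches the paper's placement of $\mathds{1}$ on $\mathbb{C}e_a$.
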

\begin{proof}
    Notice that $\mathcal{M}_{q+q^{-1},a}$ is isomorphic to $(\mathcal{D}_{2}^{+} \oplus \mathcal{D}_{2}^{-}) \oplus \mathds{1}$ as vector spaces. Let $\Psi\colon(\mathcal{D}_{2}^{+} \oplus \mathcal{D}_{2}^{-}) \oplus \mathds{1}\to\mathcal{M}_{q+q^{-1},a}$ be the linear isomorphism $\Psi(\sum_{n\neq0}b_ne_{a+2n}+b_{0}e_a)=\sum_{n\neq0}b_ne_{a+2n}+b_{0}e_a$. Let $x=\sum_{n}a_{n}\Phi_{n}\in\mathcal{I}_c$ be given. Let $\xi=\sum_{n\neq0}b_ne_{a+2n}+b_{0}e_a\in(\mathcal{D}_{2}^{+} \oplus \mathcal{D}_{2}^{-}) \oplus \mathds{1}$ be given. Then we have the following commutative diagram.
  \[
\xymatrix{
        (\mathcal{D}_{2}^{+} \oplus \mathcal{D}_{2}^{-}) \oplus\mathds{1} \ar[r]^{ \Psi} \ar[d]^{\pi(x)}
    & \mathcal{M}_{q+q^{-1},a} \ar[d]^{\pi(x)} \\
         (\mathcal{D}_{2}^{+} \oplus \mathcal{D}_{2}^{-}) \oplus\mathds{1} \ar[r]^{\Psi}
    & \mathcal{M}_{q+q^{-1},a}
   }
\xymatrix{
  \xi  \ar@{|->}[r] \ar@{|->}[d]  &  \sum_{n\neq0}b_ne_{a+2n}+b_{0}e_a\ar@{|->}[d] \\
  \xi \ar@{|->}[r]    & \sum_{n\neq0}a_nb_ne_{a+2n}+a_0b_{0}e_a
}
\]   where $\pi$ denotes the representation $\pi_{(\mathcal{D}_{2}^{+}\oplus \mathcal{D}_{2}^{-}) \oplus\mathds{1}}$. Hence $\Psi$ is a homomorphism between the representations of $\mathcal{I}_c$. 
    
\end{proof}
However, the sequence does not split in the category of $ \mathcal{O}_q(S^{2}_t) \bowtie \mathcal{I}_c$:
\begin{prop}\label{nslem}
There is no splitting $\lambda \colon \mathds{1} \to \mathcal{M}_{q+q^{-1}, a} $ in the representation category $\Rep(\mathcal{O}_q(S^{2}_t) \bowtie \mathcal{I}_c)$.
\end{prop}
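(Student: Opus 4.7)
The plan is to argue by contradiction: assume a splitting $\lambda\colon\mathds{1}\to\mathcal{M}_{q+q^{-1},a}$ exists in $\Rep(\mathcal{O}_q(S^{2}_t)\bowtie\mathcal{I}_c)$ and derive a contradiction by showing that the would-be image cannot be $\mathcal{O}_q(S^2_t)$-stable.

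First, I would pin down $\lambda(1)$ using only the $\mathcal{I}_c$-equivariance. On the unit object $\mathds{1}$ the element $\Phi_0$ acts as the identity while $\Phi_n$ for $n\neq 0$ acts as zero, so $\lambda(1)$ must lie in the $\Phi_0$-isotypic component of $\mathcal{M}_{q+q^{-1},a}$. By the preceding proposition this component is one-dimensional and spanned by $e_a$, hence $\lambda(1)=ce_a$ for some $c\in\mathbb{C}$. The splitting condition, namely that $\lambda$ is a section of the canonical surjection $\mathcal{M}_{q+q^{-1},a}\twoheadrightarrow\mathds{1}$ sending $e_a$ to the generator, forces $c\neq 0$; after rescaling I may assume $c=1$.

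Second, I would impose $\mathcal{O}_q(S^2_t)$-equivariance. For every $b\in\mathcal{O}_q(S^2_t)$ this demands $b\cdot e_a=\epsilon(b)e_a$, that is, $\mathbb{C}e_a$ is an $\mathcal{O}_q(S^2_t)$-stable subspace of $\mathcal{M}_{q+q^{-1},a}$ on which $\mathcal{O}_q(S^2_t)$ acts through $\epsilon$. I would then exhibit a single element of $\mathcal{O}_q(S^2_t)$ violating this: a natural candidate is the off-diagonal matrix coefficient $u^1_{[a+2],[a]}$, which belongs to $\ker\epsilon$ because $\eta^1_{[a+2]}\perp\eta^1_{[a]}$, but which by the construction of $\mathcal{M}_{\lambda,a}$ in \cite{DCDz1} maps $e_a$ to a nonzero scalar multiple of $e_{a+2}$ (this is, up to normalization, the operator $T_a^{+,(1)}$ that appeared in the proof of the irreducibility proposition above). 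Since $e_{a+2}$ belongs to a different $\Phi_n$-isotypic component than $e_a$, the result cannot equal $0=\epsilon(b)e_a$, giving the contradiction.

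The principal obstacle will be confirming that the coefficient of $e_{a+2}$ in $u^1_{[a+2],[a]}\cdot e_a$ is nonzero specifically at the degenerate value $\lambda=q+q^{-1}$, rather than only generically. The cleanest workaround, if the direct check proves delicate, is to argue instead that $e_a$ is a cyclic vector in $\mathcal{M}_{q+q^{-1},a}$: any $\mathcal{O}_q(S^2_t)$-submodule containing $e_a$ exhausts the infinite-dimensional module, so it cannot be the one-dimensional $\mathbb{C}e_a$ demanded by the splitting. Either route concludes the proof.
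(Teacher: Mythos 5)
Your proposal is correct and follows essentially the same route as the paper: first pin down $\lambda(1)=ce_a$ with $c\neq 0$ via the $\mathcal{I}_c$-action (the paper applies $\Phi_C$, you use $\Phi_0$), then apply an element sending $e_a$ to a nonzero multiple of $e_{a+2}$ to contradict equivariance with $\epsilon$ --- the paper uses $T_a^{+}$ directly, which is exactly the operator you identify behind $u^1_{[a+2],[a]}$. Your cyclicity fallback is a sensible safety net for the nonvanishing at $\lambda=q+q^{-1}$, but the paper simply asserts $T_a^{+}e_a=e_{a+2}$ there and concludes.
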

\begin{proof}
Suppose the contrary. The splitting $\lambda$ is uniquely determined by the vector $\lambda(1)$. Applying the map $\Phi_{C}$, we have $\lambda(1)=ce_{a}$ for some $c \in \mathbb{C} \setminus{\{0\}}$ since the subspace of $\mathcal{M}_{q+q^{-1}, a}$ consisting of the vectors with weight $[a]$ is one-dimensional.
   \[
\xymatrix{
        \mathds{1} \ar[r]^{\lambda} \ar[d]^{\Phi_{C}}
    & \mathcal{M}_{q+q^{-1},a} \ar[d]^{\Phi_{C}} \\
         \mathds{1} \ar[r]^{\lambda}
    & \mathcal{M}_{q+q^{-1},a}
   }
\xymatrix{
  1  \ar@{|->}[r] \ar@{|->}[d]  &  \lambda(1)\ar@{|->}[d] \\
  1 \ar@{|->}[r]    & \lambda(1)
}
\]
\quad Next, we apply the operator $T_{a}^{+}$, which sends $e_{a}$ to $e_{a+2}$. Then we must have $\epsilon(T_{a}^{+})e_{a}=e_{a+2}$. This is a contradiction.

 \[
\xymatrix{
        \mathds{1} \ar[r]^{\lambda} \ar[d]^{T_{a}^{+}}
    & \mathcal{M}_{q+q^{-1},a} \ar[d]^{T_{a}^{+}} \\
         \mathds{1} \ar[r]^{\lambda}
    & \mathcal{M}_{q+q^{-1},a}
   }
\xymatrix{
  1  \ar@{|->}[r] \ar@{|->}[d]  &  ce_{a}\ar@{|->}[d] \\
   \epsilon(T_{a}^{+})1 \ar@{|->}[r]    &  c\epsilon(T_{a}^{+})e_{a}=ce_{a+2}
}\]

\end{proof}
\section{Generating functionals and $1$-cocycles}
In this section we construct a generating functional on a Podle\'{s} sphere which is proper. Throughout this section let $\mathcal{A}$ be a CQG Hopf $*$-algebra and $\mathcal{B}$ be a unital left coideal $*$-subalgebra of $\mathcal{A}$. We take the convention in \cite{MT}.
\begin{defn}[generating functionals]
A linear map $L \colon \mathcal{B}\to\mathbb{C}$ is called a \emph{generating functional} if $L$ satisfies the following conditions:\\
(1)$L(a^{*}a)\geq0$ if $\epsilon(a)=0$ (\emph{conditional positivity})\\
(2)$L(a^{*})=\compconj{L(a)}$ for all\quad$a\in\mathcal{B}$ (\emph{Hermitian})\\
(3)$L(1)=0$ (\emph{vanising at $1$})
\end{defn}
By the formula $\langle a, b \rangle_{_{_{L}}}\coloneqq L((a-\epsilon(a)1)^{*}(b-\epsilon(b)1))\quad(a,b\in\mathcal{B})$, we define a positive sesquilinear form on $\mathcal{B}$. Put, just as in the case of the usual GNS construction for states on $C^{*}$-algebras, put $N_{L}\coloneqq\{a\in\mathcal{B} \mid  \langle a, a \rangle_{_{_{L}}}=0\}$ and let $\mathcal{H}_{L}\coloneqq\mathcal{B}/N_{L}$. Then we get a pre-Hilbert space. We denote the canonical map $\mathcal{B}\twoheadrightarrow  \mathcal{H}_{L}$ by $C_{L}$
\begin{prop}[GNS type construction \cite{Mic}]
\label{GNS}
Let $\mathcal{B}$ and $L$ be as above. Then we can define a map $\pi_{L} \colon \mathcal{B} \to \End(\mathcal{H}_{L})$ by $\pi_{L}(a)C_{L}(b)=C_{L}(ab)-C_{L}(a)\epsilon(b)$, where $a,b\in\mathcal{B}$. In addition, $\pi_{L}$ is a $*$-homomorphism and $C_{L}$ is a $(\pi_{L}, \epsilon)$-1-cocycle.
\end{prop}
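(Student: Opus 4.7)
The plan is to verify four things in order: (i) the formula $\pi_L(a)\colon C_L(b)\mapsto C_L(ab)-C_L(a)\epsilon(b)$ descends to a well-defined linear map on $\mathcal{H}_L$; (ii) $\pi_L$ is multiplicative with $\pi_L(1)=\id$; (iii) $\pi_L$ is $*$-preserving, so in particular takes values in $\End(\mathcal{H}_L)$; (iv) the cocycle identity for $C_L$ falls out. Before starting, I will record that $\langle 1,1\rangle_L=L(0)=0$ forces $1\in N_L$, which lets me replace $C_L(b)$ by $C_L(b_0)$ with $b_0:=b-\epsilon(b)1$, and rewrite the defining formula in the cleaner form $\pi_L(a)C_L(b)=C_L(ab_0)$. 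I will work with this formulation throughout.

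The main obstacle will be step (i): showing that if $c\in N_L$ then $ac_0\in N_L$ for every $a\in\mathcal{B}$. My strategy is to introduce the auxiliary sesquilinear form $B(x,y):=L(c_0^{*}\,x^{*}y\,c_0)$ on $\mathcal{B}$. Since $\epsilon(yc_0)=\epsilon(y)\epsilon(c_0)=0$, conditional positivity of $L$ gives $B(y,y)=L((yc_0)^{*}(yc_0))\geq 0$, and Hermiticity of $L$ gives $\overline{B(x,y)}=B(y,x)$; thus the Cauchy--Schwarz inequality applies to $B$. The hypothesis $c\in N_L$ says exactly $B(1,1)=L(c_0^{*}c_0)=0$, whence $|B(1,a^{*}a)|^{2}\leq B(1,1)B(a^{*}a,a^{*}a)=0$, i.e.\ $L(c_0^{*}a^{*}ac_0)=0$, i.e.\ $ac_0\in N_L$. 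This is the only conceptually nontrivial step.

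The remaining steps are routine verifications. Multiplicativity follows from $\pi_L(ab)C_L(c)=C_L(abc_0)$ and $\pi_L(a)\pi_L(b)C_L(c)=\pi_L(a)C_L(bc_0)=C_L\bigl(a(bc_0)_0\bigr)$, using $\epsilon(bc_0)=0$ to identify $(bc_0)_0$ with $bc_0$; the identity $\pi_L(1)=\id$ is immediate in the same spirit. For the $*$-property I will check directly that both $\langle\pi_L(a^{*})C_L(b),C_L(c)\rangle_L$ and $\langle C_L(b),\pi_L(a)C_L(c)\rangle_L$ reduce to $L(b_0^{*}\,a\,c_0)$; the reduction is transparent once one notes that $\epsilon(a^{*}b_0)=\epsilon(ac_0)=0$ so no $\epsilon$-corrections are needed in applying the inner product formula $\langle x,y\rangle_L=L(x_0^{*}y_0)$. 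This simultaneously proves that $\pi_L(a)$ is adjointable with adjoint $\pi_L(a^{*})$ and that $\pi_L$ is a $*$-homomorphism into $\End(\mathcal{H}_L)$. Finally, the cocycle identity $C_L(ab)=\pi_L(a)C_L(b)+C_L(a)\epsilon(b)$ is precisely the defining formula of $\pi_L(a)$ rearranged, so step (iv) is free.
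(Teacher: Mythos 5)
Your proof is correct. Note that the paper itself supplies no argument for this proposition --- it is quoted from Sch\"urmann \cite{Mic} --- so there is nothing internal to compare against; your write-up is essentially the standard GNS-type argument from that source. You correctly isolate the only nontrivial point, namely that $c\in N_L$ implies $ac_0\in N_L$, and your Cauchy--Schwarz argument for the auxiliary form $B(x,y)=L(c_0^*x^*yc_0)$ is valid: conditional positivity applies because $\epsilon(yc_0)=\epsilon(y)\epsilon(c_0)=0$ (using that $\epsilon$ restricts to a $*$-character on the coideal $*$-subalgebra $\mathcal{B}$), and $B(1,1)=\langle c,c\rangle_L=0$ kills $B(1,a^*a)=L(c_0^*a^*ac_0)=\langle ac_0,ac_0\rangle_L$. (One can shortcut slightly by applying Cauchy--Schwarz directly to $\langle\cdot,\cdot\rangle_L$ with the pair $c_0$ and $a^*ac_0$, both of which are killed by $\epsilon$, but this is the same computation.) The reduction $\pi_L(a)C_L(b)=C_L(ab_0)$, justified by $1\in N_L$, makes the multiplicativity and adjointability checks clean, and your verification that both $\langle\pi_L(a^*)C_L(b),C_L(c)\rangle_L$ and $\langle C_L(b),\pi_L(a)C_L(c)\rangle_L$ equal $L(b_0^*ac_0)$ is exactly what is needed for $\pi_L$ to land in $\End(\mathcal{H}_L)$ as the paper defines it (adjointable maps on a pre-Hilbert space). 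No gaps.
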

Let $0<\lambda<q+q^{-1}$. Let $\mathcal{L}_{\lambda}^+=(\pi_{\lambda},\mathcal{L}_{\lambda}^+)$. 
Let $e_{a}^{\lambda}$ be the unit vector of weight $[a]$ in the space $\mathcal{L}_{\lambda}^+$. Put $\omega_{\lambda}(\bullet)\coloneqq \langle e_{a}^{\lambda}, \pi_{\lambda}(\bullet) e_{a}^{\lambda}\rangle$, where $\langle \bullet, \bullet \rangle$ is the usual inner product. Let $\omega_{\lambda}(\bullet)\coloneqq \langle e_{a}^{\lambda}, \pi_{\lambda}(\bullet) e_{a}^{\lambda}\rangle$.
We compute $\omega_{\lambda}(u^{n}_{[a+2i], [a]})$. First we prove the following lemma.
\begin{lem}\label{wt} We have 
\begin{align*}
\langle e_{a}^{\lambda}, \pi_{\lambda}(u^{n}_{[a+2i], [a]})e_{a}^{\lambda}\rangle=
 \begin{cases}
    \langle e_{a}^{\lambda}, \pi_{\lambda}(u^{n}_{[a],[a]})e_{a}^{\lambda}\rangle & \text{if $ i=0 $,} \\
    0  & \text{if $i\neq0$.} \\
  \end{cases}
\end{align*}
for all $0<\lambda<q+q^{-1}$.
\end{lem}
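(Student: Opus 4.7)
The plan is to use the weight-space decomposition of $\mathcal{L}_\lambda^+$ under $\pi_\lambda(\sqrt{-1}R(B_t))$ and to show that $\pi_\lambda(u^n_{[a+2i],[a]})e_a^\lambda$ is an eigenvector of that operator with eigenvalue $[a+2i]$. Since $\sqrt{-1}R(B_t)$ is self-adjoint on $\mathcal{L}_\lambda^+$ with simple spectrum $\{[a+2k]:k\in\mathbb{Z}\}$ (by the classification recalled from \cite{DCDz1}), distinct eigenspaces are orthogonal, and this forces the inner product to vanish when $i\neq 0$. The case $i=0$ is tautological since the two sides are literally the same matrix coefficient.

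The key computation is the commutation of $\pi_\lambda(\sqrt{-1}R(B_t))$ with $\pi_\lambda(u^n_{[a+2i],[a]})$. Starting from $u^n_{\eta,\xi}\lhd h = u^n_{\pi_n(h^*)\eta,\xi}$, which follows directly from the comultiplication formula $\Delta(u^n_{\eta,\xi})=\sum_j u^n_{\eta,e_j}\otimes u^n_{e_j,\xi}$ paired with $h$, and using the self-adjointness of $\sqrt{-1}R(B_t)$ together with $\pi_n(\sqrt{-1}R(B_t))\eta_{[a+2i]}^n=[a+2i]\eta_{[a+2i]}^n$, one obtains
\[
u^n_{[a+2i],[a]}\lhd \sqrt{-1}R(B_t)=[a+2i]\,u^n_{[a+2i],[a]}.
\]
Combining this with the bicrossed-product multiplication rule in $\mathcal{O}_q(S_t^2)_l\bowtie I_r$ (using the coproduct $\Delta(R(B_t))$, which is of skew-primitive shape coming from $\Delta(B_t)=B_t\otimes 1+k\otimes B_t+\text{const}\cdot(k\otimes 1)$) and with $\pi_\lambda(\sqrt{-1}R(B_t))e_a^\lambda=[a]\,e_a^\lambda$, a direct calculation yields
\[
\pi_\lambda(\sqrt{-1}R(B_t))\,\pi_\lambda(u^n_{[a+2i],[a]})\,e_a^\lambda=[a+2i]\,\pi_\lambda(u^n_{[a+2i],[a]})\,e_a^\lambda,
\]
so the vector $\pi_\lambda(u^n_{[a+2i],[a]})e_a^\lambda$ lies in the weight-$[a+2i]$ subspace of $\mathcal{L}_\lambda^+$.

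For $i\neq 0$ this subspace is orthogonal to $\mathbb{C}e_a^\lambda$, so the inner product in the lemma vanishes, while for $i=0$ the identity is immediate. The main obstacle is the second displayed equation: tracking $\Delta(R(B_t))$ and the interaction of the two factors of the bicrossed product carefully enough that only the eigenvalue $[a+2i]$ survives rather than a mixture of contributions from other $[a+2k]$. An alternative route that sidesteps this bookkeeping is to expand $\pi_\lambda(u^n_{[a+2i],[a]})e_a^\lambda=\sum_k c_k e_{a+2k}^\lambda$ and then apply $\pi_\lambda(\sqrt{-1}R(B_t))$ together with the right-action identity above to force $c_k=0$ for $k\neq i$ by comparison of eigenvalues.
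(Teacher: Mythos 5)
Your overall strategy --- show that $\pi_{\lambda}(u^{n}_{[a+2i],[a]})e_{a}^{\lambda}$ lies in the weight-$[a+2i]$ subspace and invoke orthogonality of the eigenspaces of the self-adjoint operator $\pi_{\lambda}(\sqrt{-1}R(B_t))$ --- is the same mechanism the paper uses, and your right-action identity $u^{n}_{\eta,\xi}\lhd h=u^{n}_{\pi_{n}(h^{*})\eta,\xi}$ is exactly the ingredient its proof runs on. But the step you yourself flag as the main obstacle is a genuine gap, not just bookkeeping: since $\Delta(B_t)=B_t\otimes 1+k\otimes B_t+c\,(k\otimes 1)$ contains the mixed term $k\otimes B_t$, and since $\eta^{n}_{[a+2i]}$ is an eigenvector of $\pi_{n}(\sqrt{-1}R(B_t))$ but \emph{not} of $\pi_{n}(k)$ (already for $n=1$ the spherical vector mixes all three $k$-weights), commuting the generator through $u^{n}_{[a+2i],[a]}$ produces terms of the form $u^{n}_{\pi_{n}(k^{\pm 1})\eta^{n}_{[a+2i]},\eta^{n}_{[a]}}$ that do not close up on a multiple of $u^{n}_{[a+2i],[a]}$; and your proposed alternative (expand in the $e^{\lambda}_{a+2k}$ and compare eigenvalues) needs the very same commutation identity, so it does not sidestep the problem. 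The paper resolves this by replacing the generator with its weight-$[a]$ spectral projection $\Phi_{\mathcal{C}}\in\mathcal{I}$: since $\pi_{\lambda}(\Phi_{\mathcal{C}})e_{a}^{\lambda}=e_{a}^{\lambda}$, one inserts $\Phi_{\mathcal{C}}$ on the right, applies the bicrossed-product relation $u\,\Phi_{\mathcal{C}}=\Phi_{\mathcal{C}(1)}(u\lhd\Phi_{\mathcal{C}(2)})$, moves $\Phi_{\mathcal{C}(1)}$ to the left leg of the inner product where it collapses onto $e_{a}^{\lambda}$, and is left with
\begin{align*}
u^{n}_{[a+2i],[a]}\lhd\Phi_{\mathcal{C}}=u^{n}_{\pi_{n}(\Phi_{\mathcal{C}})\eta^{n}_{[a+2i]},\eta^{n}_{[a]}}=\delta_{i,0}\,u^{n}_{[a],[a]},
\end{align*}
which is precisely your right-action identity applied to a projection rather than to $R(B_t)$. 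So the fix is local: carry out your argument with the spectral projection onto the $[a]$-eigenspace in place of $\sqrt{-1}R(B_t)$ itself, and the rest of your proof (orthogonality for $i\neq 0$, tautology for $i=0$) goes through unchanged.
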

\begin{proof}
By the commuting relation, we have \begin{align*}
&\langle e_{a}^{\lambda}, \pi_{\lambda}(u^{n}_{[a+2i], [a]})e_{a}^{\lambda}\rangle\\
&=\langle e_{a}^{\lambda}, \pi_{\lambda}(u^{n}_{[a+2i], [a]}\Phi_{C})e_{a}^{\lambda}\rangle\\
&=\langle e_{a}^{\lambda}, \pi_{\lambda}((\Phi_{C(1)})(u^{n}_{[a+2i], [a]}\lhd\Phi_{C(2)}))e_{a}^{\lambda}\rangle\\
&=\langle \pi_{\lambda}(\Phi_{C(1)})e_{a}^{\lambda}, \pi_{\lambda}(u^{n}_{[a+2i], [a]}\lhd\Phi_{C(2)})e_{a}^{\lambda}\rangle\\
&=\langle \pi_{\lambda}(\Phi_{C})e_{a}^{\lambda}, \pi_{\lambda}(u^{n}_{[a+2i], [a]}\lhd\Phi_{C})e_{a}^{\lambda}\rangle\\
&=\langle e_{a}^{\lambda}, \pi_{\lambda}(u^{n}_{\Phi_{\mathcal{C}}\eta_{[a+2i]}^{n}, \eta_{[a]}^{n}})e_{a}^{\lambda}\rangle\\
&=
\begin{cases}
    \langle e_{a}^{\lambda}, \pi_{\lambda}(u^{n}_{[a],[a]})e_{a}^{\lambda}\rangle & \text{if $ i=0 $,} \\
    0  & \text{if $i\neq0$.} \\
  \end{cases}
\end{align*}
\end{proof}
By Lemma \ref{wt}, there exists some $a_{0}^{\lambda, n} \in \mathbb{C}$ such that $\omega_{\lambda}(u^{n}_{[a+2i], [a]})=\delta_{0,i}a_{0}^{\lambda, n}$, where $\delta_{i,j}$ is a Kronecker delta. This $a_{0}^{\lambda, n}$ can be expressed in terms of Askey--Wilson polynomials. Namely, by Lemma \ref{AW}, we can write $u^{n}_{[a], [a]}=P_{n}(u^{1}_{[a],[a]})$, where $P_{n}$ is a certain normalization of the Askey--Wilson polynomial. Then we have \begin{align*}
    a_{0}^{\lambda, n}&=\langle e_{a}^{\lambda}, \pi_{\lambda}(P_{n}(u^{1}_{[a], [a]}))e_{a}^{\lambda} \rangle\\
    &=\langle e_{a}^{\lambda}, \pi_{\lambda}(P_{n}(\tfrac{A_{a}+(q+q^{-1})^{-1}\llbracket a\rrbracket^{2}}{q+q^{-1}+(q+q^{-1})^{-1}\llbracket a\rrbracket^2})e_{a}^{\lambda} \rangle\\
    &=\langle e_{a}^{\lambda}, \pi_{\lambda}(P_{n}(\tfrac{\lambda+(q+q^{-1})^{-1}\llbracket a\rrbracket^{2}}{q+q^{-1}+(q+q^{-1})^{-1}\llbracket a\rrbracket^2}))e_{a}^{\lambda} \rangle\\
&=P_{n}(\tfrac{\lambda+(q+q^{-1})^{-1}\llbracket a\rrbracket^{2}}{q+q^{-1}+(q+q^{-1})^{-1}\llbracket a\rrbracket^2}).
\end{align*}
From this and simple computation, we have 
\begin{thm}\label{gen}
For each $b\in \mathcal{O}_{q}(S_{t}^{2})$, the limit $\displaystyle\lim_{\lambda \to q+q^{-1}}  \dfrac{\omega_{\lambda}(b)-\epsilon(b)}{q+q^{-1}-\lambda}$ converges and defines a generating functional on $\mathcal{O}_{q}(S_{t}^{2})$. Furthermore, the limit is expressed as a constant multiple of the derivation of some normalization of Askey--Wilson polynomial (cf.\ Lemma \ref{AW}). 
\end{thm}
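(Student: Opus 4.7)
The plan is to verify convergence of the difference quotient on the distinguished linear basis $\{u^n_{[a+2i],[a]}\}$ of $\mathcal{O}_q(S_t^2)$ (highlighted in the discussion preceding Lemma~\ref{AW}), extend by linearity to obtain a well-defined linear functional $L$, and then check that $L$ satisfies the three axioms of a generating functional; each axiom will transfer from the corresponding property of the vector states $\omega_\lambda$.

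For convergence, the basis splits into two cases. When $i\neq 0$, Lemma~\ref{wt} gives $\omega_\lambda(u^n_{[a+2i],[a]})=0$, while $\epsilon(u^n_{[a+2i],[a]}) = \langle \eta^n_{[a+2i]},\eta^n_{[a]}\rangle = 0$, so the difference quotient is identically zero. When $i=0$, I would exploit the explicit identity $\omega_\lambda(u^n_{[a],[a]}) = P_n(x_\lambda)$ derived just before the theorem, where $x_\lambda \coloneqq \tfrac{\lambda+c}{q+q^{-1}+c}$ and $c \coloneqq (q+q^{-1})^{-1}\llbracket a\rrbracket^2$, together with the fact that $\epsilon(u^n_{[a],[a]}) = P_n(1) = 1$ (since $u^n_{[a],[a]}=P_n(u^1_{[a],[a]})$ and $\epsilon(u^1_{[a],[a]})=1$). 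The algebraic identity $q+q^{-1}-\lambda = (q+q^{-1}+c)(1-x_\lambda)$ then yields
\[
\frac{\omega_\lambda(u^n_{[a],[a]})-\epsilon(u^n_{[a],[a]})}{q+q^{-1}-\lambda}
= -\frac{1}{q+q^{-1}+c}\cdot\frac{P_n(x_\lambda)-P_n(1)}{x_\lambda-1}
\;\xrightarrow[\lambda\uparrow q+q^{-1}]{}\; -\frac{P_n'(1)}{q+q^{-1}+c}.
\]
This simultaneously proves convergence and exhibits the limit as a scalar multiple of $P_n'(1)$; through the rescaling identifying $P_n$ with the Askey--Wilson polynomial $p_n$ (paragraph following Lemma~\ref{AW}), this is precisely the claimed ``constant multiple of the derivative of a normalization of an Askey--Wilson polynomial.''

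For the generating-functional axioms: $L(1)=0$ is immediate since $\omega_\lambda(1)=1=\epsilon(1)$ already for each $\lambda$. The Hermitian property $L(b^\ast)=\overline{L(b)}$ follows because each $\omega_\lambda$ is a vector state (hence Hermitian), $\epsilon$ is a $\ast$-homomorphism so $\epsilon(b^\ast) = \overline{\epsilon(b)}$, and $q+q^{-1}-\lambda$ is real. For conditional positivity, if $\epsilon(a)=0$ then $\epsilon(a^\ast a)=0$, and
\[
L(a^\ast a)=\lim_{\lambda\uparrow q+q^{-1}}\frac{\omega_\lambda(a^\ast a)}{q+q^{-1}-\lambda}\geq 0,
\]
since the numerator is non-negative by vector-state positivity and the denominator is strictly positive throughout $0<\lambda<q+q^{-1}$.

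The only non-routine input is the identity $\omega_\lambda(u^n_{[a],[a]})=P_n(x_\lambda)$, which the paper has already established using Lemma~\ref{AW} and the fact that $u^1_{[a],[a]}$ acts on $e^\lambda_a$ as scalar multiplication by $x_\lambda$. Given that identity, the rest is a single difference-quotient calculation plus three short verifications, so I expect no genuine obstacle; the only delicate point is fixing the one-sided limit $\lambda\uparrow q+q^{-1}$ so that the sign of the denominator supports conditional positivity.
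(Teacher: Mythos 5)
Your proposal is correct and follows essentially the same route as the paper: reduce to the basis $\{u^n_{[a+2i],[a]}\}$, dispose of the $i\neq 0$ terms via Lemma~\ref{wt}, and evaluate the $i=0$ difference quotient through the identity $\omega_\lambda(u^n_{[a],[a]})=P_n\bigl(\tfrac{\lambda+c}{q+q^{-1}+c}\bigr)$ established just before the theorem. You are in fact somewhat more careful than the paper on two points: you actually verify the three generating-functional axioms (including the one-sided limit needed for conditional positivity), which the paper dismisses as clear, and your limit $-P_n'(1)/(q+q^{-1}+c)$ correctly retains the constant factor that the paper's displayed value $-P'_n(1)$ drops --- harmless here, since the theorem only asserts a constant multiple of the derivative.
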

\begin{proof}
Since the set $\{u^{n}_{[a+2i], [a]}\mid n\in\mathbb{Z}_{\geq 0}, i=-n,\ldots,n\}$ forms a basis for $\mathcal{O}_{q}(S_{t}^{2})$, it suffices to show that  $\displaystyle\lim_{\lambda \to q+q^{-1}}  \dfrac{\omega_{\lambda}(u^{n}_{[a+2i], [a]})-\epsilon(u^{n}_{[a+2i], [a]})}{q+q^{-1}-\lambda}$ converges and defines a generating functional for each $n\in\mathbb{Z}_{\geq 0}$ and $i=-n,\ldots,n$.\par
 It is clear that this limits defines a generating functional if the limits exists. Let us show that the limit exists. By Lemma \ref{wt}, this limits is equal to $0$ if $i\neq0$. If $i=0$, then by Lemma \ref{wt} again 
\begin{align*}
&\displaystyle\lim_{\lambda \to q+q^{-1}}  \dfrac{\omega_{\lambda}(u^{n}_{[a+2i], [a]})-\epsilon(u^{n}_{[a+2i], [a]})}{q+q^{-1}-\lambda}\\
&=\displaystyle\lim_{\lambda \to q+q^{-1}}  \dfrac{\omega_{\lambda}(u^{n}_{[a], [a]})-\epsilon(u^{n}_{[a], [a]})}{q+q^{-1}-\lambda}\\
&=\displaystyle\lim_{\lambda \to q+q^{-1}}  \dfrac{P_{n}(\tfrac{\lambda+(q+q^{-1})^{-1}\llbracket a\rrbracket^{2}}{q+q^{-1}+(q+q^{-1})^{-1}\llbracket a\rrbracket^2})-1}{q+q^{-1}-\lambda}\\
&=-P'_{n}(1).
\end{align*}
Thus the limit exists for each $b\in\mathcal{O}_{q}(S_t^2)$.
\end{proof}
\begin{Rem}
These calculations and \cite{MT} show that our generating functional $L$ is proper. 
\end{Rem}
We put $L \coloneqq \{a+2\}\{a\}^{-1}(\{2a+1\}+q+q^{-1})^{-1}\displaystyle\lim_{\lambda \to q+q^{-1}} \dfrac{\omega_{\lambda}(\bullet)-\epsilon(\bullet)}{q+q^{-1}-\lambda}$, then, by Proposition \ref{GNS}, we can construct a $1$-cocycle $C_L$ on $\mathcal{O}_{q}(S_{t}^{2})$. We show $1$-cocycles $C$ and $C_L$ are unitarily equivalent.
\begin{prop}
We equip $\mathcal{D}_{2}^{+} \oplus \mathcal{D}_{2}^{-}$ with the invariant inner product constructed in \cite[Subsection 4.2]{MT}. Then There exists a unitary operator $U\colon \mathcal{H}_{L} \to \mathcal{D}_{2}^{+} \oplus \mathcal{D}_{2}^{-}$ satisfying $C(\bullet)=UC_{L}(\bullet)$.
\end{prop}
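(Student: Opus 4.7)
The strategy is to define $U$ on the dense subspace spanned by cocycle images by $U(C_L(b)) \defeq C(b)$ for $b \in \mathcal{O}_q(S_t^2)$, and to prove simultaneously that this map is well-defined, isometric, and surjective by comparing inner products on the natural basis $\{u^{n}_{[a+2i],[a]}\}_{n\geq 0,\, -n\leq i \leq n}$. Both $\mathcal{H}_L$ and $\mathcal{D}_2^+\oplus\mathcal{D}_2^-$ are linearly spanned by the images of this basis under $C_L$ and $C$ respectively (note that $C_L(u^0_{[a],[a]})=C_L(1)=0$ and likewise $C(1)=0$, reflecting the fact that the trivial component $\mathds{1}$ is factored out), so surjectivity is automatic. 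The intertwining property $U\pi_L(a)=\pi_{\mathcal{D}_2^+\oplus\mathcal{D}_2^-}(a)U$ is then a formal consequence of the cocycle identity $C_{(\cdot)}(ab)=\pi(a)C_{(\cdot)}(b)+C_{(\cdot)}(a)\epsilon(b)$ satisfied by both $C_L$ (Proposition \ref{GNS}) and $C$.

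The first simplification comes from weights. Both $\mathcal{H}_L$ and $\mathcal{D}_2^+\oplus\mathcal{D}_2^-$ carry a weight decomposition under the action of $\sqrt{-1}R(B_t)$, and the cocycle image $C_{(\cdot)}(u^{n}_{[a+2i],[a]})$ has weight $[a+2i]$. Combined with the $\Phi_\mathcal{C}$-bi-invariance argument of Lemma \ref{wt}, this forces
\[
\langle C_L(u^{n}_{[a+2i],[a]}),\, C_L(u^{m}_{[a+2j],[a]})\rangle_L = 0 \quad \text{unless } i=j,
\]
and similarly for $C$. So everything reduces to matching diagonal norms $\|C_L(u^{n}_{[a+2i],[a]})\|_L^2$ with $\|C(u^{n}_{[a+2i],[a]})\|^2$ for each $n\geq 1$ and $i$.

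To compute the first, unwind the definition: $\|C_L(u^{n}_{[a+2i],[a]})\|_L^2 = L((u^{n}_{[a+2i],[a]})^{*}u^{n}_{[a+2i],[a]})$ since $\epsilon$ vanishes on these basis elements for $n\geq 1$. Expanding the product $(u^{n}_{[a+2i],[a]})^{*}u^{n}_{[a+2i],[a]}$ via the Clebsch--Gordan decomposition, only the isotypic components of $u^{k}_{[a],[a]}$ survive after applying $L$ (by Lemma \ref{wt} applied to $L$ itself, which inherits $\Phi_\mathcal{C}$-bi-invariance from the $\omega_\lambda$'s), and Lemma \ref{AW} together with Theorem \ref{gen} identifies $L(u^{k}_{[a],[a]})$ with the constant times $-P'_{k}(1)$. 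On the other side, the norm of $C(u^{n}_{[a+2i],[a]})$ in the invariant inner product of \cite[Subsection 4.2]{MT} is an explicit product of $q$-numbers. The normalization factor $\{a+2\}\{a\}^{-1}(\{2a+1\}+q+q^{-1})^{-1}$ placed in front of $L$ is chosen precisely to align these two expressions.

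The main obstacle is the bookkeeping in this last step: converting the derivative of the Askey--Wilson polynomial at $1$ (which controls $L(u^{k}_{[a],[a]})$) into the same closed form of $q$-Pochhammer symbols that expresses the invariant norm in $\mathcal{D}_2^\pm$ used in \cite{MT}. Once these are matched on the generating family $u^{n}_{[a],[a]}$ (equivalently, on $P_n(u^{1}_{[a],[a]})$), the full matching on $u^{n}_{[a+2i],[a]}$ follows by acting with the raising/lowering elements of $\mathcal{O}_q(S_t^2)$ and invoking the intertwining property already established formally from the cocycle conditions.
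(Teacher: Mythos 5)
Your setup (define $U$ by $C_L(b)\mapsto C(b)$, reduce well-definedness and unitarity to matching inner products on the basis $u^{n}_{[a+2i],[a]}$, use weight considerations for the off-diagonal terms) agrees with the paper's. But the heart of the proposition is the equality of the diagonal norms, and at exactly that point your argument stops being a proof: you assert that the prefactor $\{a+2\}\{a\}^{-1}(\{2a+1\}+q+q^{-1})^{-1}$ ``is chosen precisely to align these two expressions'' and defer the comparison of $-P_n'(1)$ with the $q$-Pochhammer expression for $\Vert C(u^{n}_{[a+2i],[a]})\Vert^2$ as ``bookkeeping.'' That comparison is the entire content of the statement, so leaving it unexecuted is a genuine gap, not a detail. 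Moreover, the route you propose for it is harder than necessary and partly circular: you cannot ``invoke the intertwining property already established formally from the cocycle conditions'' to propagate the matching from $u^{n}_{[a],[a]}$ to $u^{n}_{[a+2i],[a]}$, because the intertwiner $U$ only exists once well-definedness (i.e.\ the isometry) is known; and the proposed generating family $u^{n}_{[a],[a]}$ is useless as a starting point since $C(u^{n}_{[a],[a]})=C_L(u^{n}_{[a],[a]})=0$ (both cocycles kill the spherical elements), so matching there is the vacuous identity $0=0$.

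The paper avoids all Askey--Wilson computation by using the structural fact you missed: the invariant inner product on $\mathcal{D}_2^+\oplus\mathcal{D}_2^-$ from \cite[Subsection 4.2]{MT} is itself realized as the limit
\[
\langle C(x),C(x)\rangle_{\mathcal{M}_{q+q^{-1},a}}
=\lim_{\lambda\to q+q^{-1}}\tfrac{\{a+2\}}{\{a\}(\{2a+1\}+q+q^{-1})}\,
\tfrac{\langle \pi_{\lambda}(x)e_a-\epsilon(x)e_a,\ \pi_{\lambda}(x)e_a-\epsilon(x)e_a\rangle_{\lambda}}{q+q^{-1}-\lambda},
\]
and the right-hand side is, by the very definition of $L$ as $\{a+2\}\{a\}^{-1}(\{2a+1\}+q+q^{-1})^{-1}\lim_{\lambda}\frac{\omega_{\lambda}-\epsilon}{q+q^{-1}-\lambda}$, exactly $L((x-\epsilon(x))^{*}(x-\epsilon(x)))=\Vert C_L(x)\Vert_L^2$. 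So the identity of inner products is essentially tautological once one recalls how the target inner product was constructed; no derivative of $P_n$ and no $q$-Pochhammer matching ever enters. If you want to salvage your explicit-computation route you would have to actually carry out the evaluation of both sides, but the cleaner fix is to replace your third and fourth paragraphs with the limit identification above.
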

\begin{proof}
Let $U\colon\mathcal{H}_{L} \to \mathcal{D}_{2}^{+}\oplus\mathcal{D}_{2}^{-}$ be the correspondence $C_{L}(x)\mapsto C(x)$. We show that $U$ is well-defined and unitary. Since the surjectivity of $U$ is clear, it suffices to show that $\langle C(x), C(x)\rangle_{_{\mathcal{M}_{q+q^{-1}, a}}}=\langle C_{L}(x), C_{L}(x)\rangle_{L}$  for all $x=\sum_{n\in\mathbb{Z}_{\geq0}}\sum_{i=-n,\ldots, n}b^{n}_{i}u_{[a+2i],[a]}$, where $b^{n}_{i}\in\mathbb{C}$ and $\langle \bullet, \bullet\rangle_{_{\mathcal{M}_{q+q^{-1}, a}}}$ is the invariant inner product constructed in \cite[Subsection 4.2]{MT}. Notice that $C_{L}(u^n_{[a], [a]})=C(u^n_{[a], [a]})=0$. Then we have 

\begin{align*}
   & \langle C(x), C(x)\rangle_{_{\mathcal{M}_{q+q^{-1}, a}}}\\
   &=\sum_{n,m\in\mathbb{Z}_{\geq0}}\sum_{i,j=-n,\ldots,n}\compconj{a_i^n}a_j^m\langle C(u^n_{[a+2i], [a]}), C(u^m_{[a+2j], [a]})\rangle_{_{\mathcal{M}_{q+q^{-1}, a}}}\\
   &=\sum_{n,m\in\mathbb{Z}_{>0}}\sum_{i,j=-n,\ldots,n}\compconj{a_i^n}a_j^m\langle C(u^n_{[a+2i], [a]}), C(u^m_{[a+2j], [a]})\rangle_{_{\mathcal{M}_{q+q^{-1}, a}}}\\
   &=\sum_{n,m\in\mathbb{Z}_{>0}}\sum_{i,j=-n,\ldots,n}\lim_{\lambda}\compconj{a_i^n}a_j^m\tfrac{\{a+2\}}{\{a\}(\{2a+1\}+q+q^{-1})}\\
   &\cdot\dfrac{\langle \pi_{\lambda}(u^n_{[a+2i], [a]})e_{a}-\epsilon(u^n_{[a+2i], [a]})e_{a},  \pi_{\lambda}(u^n_{[a+2i], [a]})e_{a}-\epsilon(u^n_{[a+2i], [a]})e_{a}\rangle_{_{_{\lambda}}}}{q+q^{-1}-\lambda}
\end{align*}
\begin{align*}
   &=\sum_{n,m\in\mathbb{Z}_{>0}}\sum_{i,j=-n,\ldots,n}\lim_{\lambda}\compconj{a_i^n}a_j^m\tfrac{\{a+2\}}{\{a\}(\{2a+1\}+q+q^{-1})}\tfrac{\langle \pi_{\lambda}(u^n_{[a+2i], [a]})e_{a},  \pi_{\lambda}(u^n_{[a+2i], [a]})e_{a}\rangle_{_{_{\lambda}}}}{q+q^{-1}-\lambda}\\
 &=\sum_{n,m\in\mathbb{Z}_{>0}}\sum_{i,j=-n,\ldots,n}\compconj{a_i^n}a_j^m L(u^{n\ast}_{[a+2i], [a]}u^m_{[a+2j], [a]})\\
   &=\sum_{n,m\in\mathbb{Z}_{>0}}\sum_{i,j=-n,\ldots,n}\compconj{a_i^n}a_j^m\langle C_{L}(u^n_{[a+2i], [a]}), C_{L}(u^m_{[a+2j], [a]})\rangle_{_{L}}\\
   &=\sum_{n,m\in\mathbb{Z}_{\geq0}}\sum_{i,j=-n,\ldots,n}\compconj{a_i^n}a_j^m\langle C_{L}(u^n_{[a+2i], [a]}), C_{L}(u^m_{[a+2j], [a]})\rangle_{_{L}}\\
   &=\langle C_{L}(x), C_{L}(x)\rangle_{_{L}}
\end{align*}
This completes the proof.
\end{proof}
We introduce the notion of the Gaussian part and the non-Gaussian part of a given $1$-cocycle.
\begin{defn}\label{Gauss}\cite[Chapter 5]{Mic}\cite[Definition 2.3.]{FGT}
Let $\mathcal{B}$ be a unital left coideal $*$-subalgebra of a $*$-bialgebra. Let $\pi \colon \mathcal{B} \to \End(\mathcal{H})$ be a $*$-representation of $\mathcal{B}$ on a pre-Hilbert space $\mathcal{H}$. A $(\pi, \epsilon)$-$1$-cocycle $C \colon \mathcal{B} \to \mathcal{H}$ on $\mathcal{B}$ is called \emph{Gaussian} if  $C(bc)=\epsilon(b)C(c)+C(b)\epsilon(c)$ for all $b,c\in \mathcal{B}$.
\end{defn}
Let $\mathcal{B}$ and $\pi$ be as above. Let $C$ be a surjective $(\pi, \epsilon)$-$1$-cocycle. Let $H$ denotes the completion of $\mathcal{H}$. Let $H_{G}=\{(\pi(b)-\epsilon(b))C(c) \mid b,c\in \mathcal{B}\}^{\perp}$ and $H_{NG}=\overline{\{(\pi(b)-\epsilon(b))C(c) \mid b,c\in \mathcal{B}\}}$. Let $P_{G}$ and $P_{NG}$ be the orthogonal projections onto $H_{G}$ and $H_{NG}$ respectively. We put $\mathcal{H}_{G}\coloneqq P_{G}\mathcal{H}$ and $\mathcal{H}_{NG}\coloneqq P_{NG}\mathcal{H}$. Define representations $\pi_{G}$ and $\pi_{NG}$ on $\mathcal{H}_{G}$ and $\mathcal{H}_{NG}$ respectively by the following formula:
 \begin{align*}
        \pi_{G}(b)P_{G}C(c)&=\epsilon(b)P_{G}C(c)\quad(b,c \in\mathcal{B})\\
       \pi_{NG}P_{NG}C(c)&=\pi(b)C(c)-\epsilon(b)P_{G}C(c)\quad(b,c \in\mathcal{B})
    \end{align*}
Then $C_{G}\coloneqq P_{G}\circ C$ is a $(\pi_{G}, \epsilon)$-$1$-cocycle (in particular it is Gaussian), and  $C_{NG}\coloneqq P_{NG}\circ C$ is a $(\pi_{NG}, \epsilon)$-$1$-cocycle. Furthermore we have a decomposition $C=C_{G}+C_{NG}$. We call $C_{G}$ and $C_{NG}$ \emph{the Gaussian part of $C$} and \emph{the non-Gaussian part of C} respectively. 
\begin{defn}\label{NG}
We say $C$ is \emph{purely non-Gaussian} if $\mathcal{H}_{G}=0$.    
\end{defn}
\begin{prop}
Our $1$-cocycle $C$ is purely non-Gaussian.
\end{prop}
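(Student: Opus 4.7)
The plan is in two parts: first reformulate $H_G$ as the space of $\mathcal{B}$-invariant vectors in $H$, and then rule out such vectors using the irreducibility of the discrete series.

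By the preceding proposition, $C$ is unitarily equivalent to the GNS cocycle $C_L \colon \mathcal{B} \twoheadrightarrow \mathcal{H}_L$, so $C(\mathcal{B})$ is dense in $H = \overline{\mathcal{D}_2^+} \oplus \overline{\mathcal{D}_2^-}$. For $v \in H$ the orthogonality condition $\langle v, (\pi(b) - \epsilon(b))C(c)\rangle = 0$ for all $b, c \in \mathcal{B}$ rearranges, using the $*$-representation property of $\pi$, to $(\pi(b^{*}) - \epsilon(b^{*}))v \in C(\mathcal{B})^{\perp} = 0$ for every $b \in \mathcal{B}$. Hence $H_G = \{v \in H : \pi(b)v = \epsilon(b)v \text{ for all } b \in \mathcal{B}\}$. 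Since $\pi$ respects the orthogonal decomposition of $H$, it suffices to exclude nonzero $\mathcal{B}$-invariant vectors in $\overline{\mathcal{D}_2^+}$ (the case $\overline{\mathcal{D}_2^-}$ being symmetric).

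Let $v = \sum_k \alpha_k e_{a+2k} \in \overline{\mathcal{D}_2^+}$ be $\mathcal{B}$-invariant, so that $\pi(b)v = 0$ for every $b \in \mathcal{B}_+$. I extract the information that each $\pi(\mathcal{B}_+) e_{a+2n}$ is large, using the irreducibility of $\mathcal{D}_2^+$ as a $\mathcal{B} \bowtie \mathcal{I}_c$-module \cite{DCDz1}: since $\mathcal{I}_c$ acts on the weight vector $e_{a+2n}$ only by scalars, the cyclic subspace $\pi(\mathcal{B})e_{a+2n}$ is already dense in $\overline{\mathcal{D}_2^+}$, and writing $\pi(\mathcal{B}) = \mathbb{C}\cdot\id + \pi(\mathcal{B}_+)$ shows that for every $m \neq n$ one can find $b \in \mathcal{B}_+$ with $\langle \pi(b) e_{a+2n}, e_{a+2m}\rangle \neq 0$. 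Applying this observation (and its $*$-version, valid since $\mathcal{B}_+$ is $*$-closed), the span of $\{\pi(b^{*})e_{a+2m} : b \in \mathcal{B}_+, m \geq 1\}$ is dense in $\overline{\mathcal{D}_2^+}$. Now pairing the identity $\pi(b)v = 0$ with each $e_{a+2m}$ gives $\sum_k \alpha_k \langle \pi(b)e_{a+2k}, e_{a+2m}\rangle = 0$; that is, $(\alpha_k)$ is orthogonal in $\ell^2$ to each of these dense-spanning sequences, so $\alpha_k = 0$ for all $k$, contradicting $v \neq 0$. Thus $H_G = 0$ and $C$ is purely non-Gaussian.

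The main obstacle is the density claim that $\mathbb{C} e_{a+2n} + \pi(\mathcal{B}_+) e_{a+2n}$ recovers all of $\overline{\mathcal{D}_2^+}$; this follows cleanly from the stated irreducibility, but care is needed to verify that no single weight vector in $\overline{\mathcal{D}_2^+}$ is annihilated by all of $\pi(\mathcal{B}_+)$ (otherwise its span would be a proper $\mathcal{B} \bowtie \mathcal{I}_c$-invariant subspace, contradicting irreducibility). The subsequent collapse of the resulting $\ell^2$ system is then routine.
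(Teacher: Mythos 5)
Your strategy is genuinely different from the paper's: the paper proves the proposition in one line by checking, from the explicit formulas for $C(u^{n}_{[a+2i],[a]})$ in \cite{MT}, that the set $\{(\pi(b)-\epsilon(b))C(c)\mid b,c\in\mathcal{O}_q(S_t^2)\}$ already contains every basis vector $e_{a+2n}$, $n\neq 0$, so that its orthogonal complement $H_G$ vanishes. Your reduction of $H_G$ to the space of $\mathcal{B}$-invariant vectors is legitimate (it uses the surjectivity of $C$ from the preceding proposition and the boundedness of the operators $\pi(b)$, which holds since $\mathcal{O}_q(S_t^2)\subset\mathcal{O}_q(SU(2))$), and the appeal to irreducibility of $\mathcal{D}_2^{\pm}$ is more structural and would survive in situations where no explicit cocycle formulas are available.

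There is, however, a gap at the decisive density step. Cyclicity of $e_{a+2n}$ gives you, correctly, that for every $m\neq n$ some $b\in\mathcal{B}_+$ has $\langle\pi(b)e_{a+2n},e_{a+2m}\rangle\neq 0$; but this does not imply that $W\coloneqq\spn\{\pi(b)e_{a+2m}\mid b\in\mathcal{B}_+,\ m\}$ is dense. What you actually obtain is that $W+\mathbb{C}e_{a+2m}$ is dense for each $m$, hence that $W^{\perp}$ has dimension at most one and that any nonzero $v\in W^{\perp}$ has all weight components nonzero --- and $W^{\perp}$ is precisely the space of invariant vectors you are trying to show is zero, so as written the argument is circular at this point. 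The missing ingredient is the weight grading: each homogeneous element of $\mathcal{O}_q(S_t^2)$ shifts the $\sqrt{-1}B_t$-weight by a fixed amount, so $W$ is spanned by weight vectors and is therefore invariant under every $\Phi_n$; hence $W^{\perp}$ is also $\Phi_n$-invariant and is itself spanned by weight vectors, and a weight vector $e_{a+2n}\in W^{\perp}$ contradicts your matrix-coefficient fact with the roles of $n$ and $m$ exchanged. (Equivalently: if $v=\sum_k\alpha_k e_{a+2k}$ is invariant and $\alpha_m\neq 0$, applying $\Phi_m$ and the commutation relations shows that $\mathbb{C}e_{a+2m}$ is invariant under $\mathcal{O}_q(S_t^2)\bowtie\mathcal{I}_c$, contradicting irreducibility.) Your closing caveat about weight vectors annihilated by $\pi(\mathcal{B}_+)$ points at the right fact, but by itself it does not handle non-weight vectors in the completion; once the reduction to weight vectors is inserted, your proof closes.
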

\begin{proof}
Since $\{(\pi(b)-\epsilon(b))C(c) \mid b,c\in \mathcal{O}_q(S_t^2)\}$  contains all of $e_{a+2n}$ for all non-zero integers $n$, we have $\mathcal{H}_{G}=0$.
\end{proof}
\section{Some more observations}
In this section we propose some observations of the property the quantum $SL(2,\mathbb{R})$ has.\par
A topological group $G$ is \emph{liminal} if the full group $C^{\ast}$-algebra $C^{\ast}(G)$ is liminal. Hence in order to prove that $G$ is of type $1$, it suffices to show that for every irreducible unitary representation $(\pi,H)$ of $C^{\ast}(G)$ we have $\pi(C^{\ast}(G))\subset\mathbb{K}(H)$. In this subsection we show that the object which should be regarded as $C^{\ast}_u(SL_q(2,\mathbb{R})_t)$ is `liminal' by showing that for every irreducible unitary representation $(\pi,H)$ of $C^{\ast}_u(SL_q(2,\mathbb{R})_t)$, we have $\pi(C^{\ast}_u(SL_q(2,\mathbb{R})_t)) \subset\mathbb{K}(H)$. At first we show that the full `group' $C^{\ast}$-algebra $C^{\ast}_u(SL_q(2,\mathbb{R})_t)$ exists.
We define the full `group' $C^{\ast}$-algebra $C^{\ast}_u(SL_q(2,\mathbb{R})_t)$ by the completion of the algebra $C_c(SL_q(2,\mathbb{R})_t)\coloneqq\mathcal{O}_q(S_t^2)\bowtie\mathcal{I}_c$ (of `continuous functions' on $SL_q(2,\mathbb{R})$ of compact support) with respect to the $C^{\ast}$-norm $||\bullet||_{u}$ defined by
\begin{align*}
    ||f||_{u}\coloneqq\sup\{||\pi(f)||;\pi\hspace{1mm}\text{is an irreducible} \ast\text{-representation of}\hspace{1mm}C_c(SL_q(2,\mathbb{R})_t)\}.
\end{align*}
\begin{prop}$||\bullet||_{u}$ is a well-defined $C^{\ast}$-norm. Hence the $C^{\ast}$-algebra $C^{\ast}_u(SL_q(2,\mathbb{R})_t)$ exists.
\end{prop}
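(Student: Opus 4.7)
The plan is to verify three things in turn: that the supremum defining $||f||_u$ is finite for every $f \in C_c(SL_q(2,\mathbb{R})_t)$, that it satisfies the $C^{\ast}$-seminorm axioms, and that it separates points.

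For finiteness I would expand an arbitrary element as a finite sum $f=\sum_n b_n \Phi_n$ with $b_n\in \mathcal{O}_q(S_t^2)$. For any irreducible $\ast$-representation $\pi$, submultiplicativity of the operator norm gives $||\pi(f)||\leq \sum_n ||\pi(b_n)||\,||\pi(\Phi_n)||$. Each $\Phi_n$ is a self-adjoint idempotent in $\mathcal{I}_c$, so $||\pi(\Phi_n)||\leq 1$. The restriction of $\pi$ to $\mathcal{O}_q(S_t^2)$ is a $\ast$-representation of the Podle\'{s} sphere, and since $\mathcal{O}_q(S_t^2)$ admits a universal $C^{\ast}$-completion (sitting inside $C(SU_q(2))$), each $b_n$ acts by a bounded operator whose norm is dominated by the universal $C^{\ast}$-norm $||b_n||_u$, uniformly in $\pi$. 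The resulting bound $||\pi(f)||\leq \sum_n ||b_n||_u$ is then independent of $\pi$, so the supremum is finite. Granted finiteness, the $C^{\ast}$-seminorm axioms are automatic, since the pointwise supremum of a uniformly bounded family of $C^{\ast}$-seminorms $f\mapsto ||\pi(f)||$ is again a $C^{\ast}$-seminorm.

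For nondegeneracy I would appeal to the faithfulness of the regular representation, Proposition \ref{fflrep}. One first observes that the sup over irreducibles equals the sup $||\bullet||_{\max}$ over all $\ast$-representations: the Hausdorff completion of $(C_c(SL_q(2,\mathbb{R})_t),||\bullet||_{\max})$ is a $C^{\ast}$-algebra in which irreducible representations separate points, and every $\ast$-representation of $C_c(SL_q(2,\mathbb{R})_t)$ factors through this envelope. Hence $||\pi_{\text{reg}}(f)||\leq ||f||_u$, and the faithfulness of $\pi_{\text{reg}}$ forces $||f||_u>0$ whenever $f\neq 0$. The main obstacle lies in the uniform bound on $||\pi(b_n)||$ in the first step: unlike representations of the full Hopf $\ast$-algebra $\mathcal{O}_q(SU(2))$, where matrix coefficients of unitary corepresentations are automatically contractive, a $\ast$-representation of a coideal $\ast$-subalgebra does not a priori extend to the ambient algebra. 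Thus one must invoke the known universal enveloping $C^{\ast}$-algebra of every Podle\'{s} sphere; with that in hand, the remainder is a standard enveloping-$C^{\ast}$-algebra computation.
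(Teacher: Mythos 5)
Your argument is correct, and it is substantially more complete than the paper's own proof, which consists of a single sentence asserting that the proposition ``follows from'' the faithfulness of the regular representation (Proposition \ref{fflrep}). That one-liner only speaks to nondegeneracy, and even there it skips the point you handle explicitly: $\pi_{\mathrm{reg}}$ is not irreducible, so one must know that $\|\pi_{\mathrm{reg}}(f)\|\le\|f\|_u$, which you obtain by identifying $\|\bullet\|_u$ with the maximal $C^{\ast}$-seminorm via the enveloping $C^{\ast}$-algebra. The paper says nothing at all about finiteness of the supremum, which you correctly isolate as the real content. On the one technical point you flag --- that a $\ast$-representation of the coideal need not extend to $\mathcal{O}_q(SU(2))$, so contractivity of matrix coefficients is not automatic --- there is a direct fix that avoids citing the universal $C^{\ast}$-completion of the Podle\'{s} sphere as a black box: since $\{\eta^{n}_{[a+2i]}\}_{i=-n}^{n}$ is an orthonormal basis of $H_n$, the unitarity relation $\sum_{i=-n}^{n}(u^{n}_{[a+2i],[a]})^{\ast}u^{n}_{[a+2i],[a]}=1$ is an identity among elements of the $\ast$-subalgebra $\mathcal{O}_q(S_t^2)$ itself, so any $\ast$-representation of the coideal by adjointable operators sends each basis element $u^{n}_{[a+2i],[a]}$ to a contraction, and the uniform bound on $\|\pi(b_n)\|$ follows by expanding $b_n$ in this basis. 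With that substitution your proof is a rigorous version of what the paper merely asserts; the only ingredient you share with the paper's argument is the appeal to Proposition \ref{fflrep} for injectivity.
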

\begin{proof}
This proposition follows from the fact that the regular representation is faithful on $C_c(SL_q(2,\mathbb{R})_t)$(Proposition \ref{fflrep}).
\end{proof}
\begin{thm}
The quantum $SL_q(2,\mathbb{R})$ is liminal, meaning that its full `group' $C^{\ast}$-algebra is liminal.
\end{thm}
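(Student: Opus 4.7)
To establish liminality, the plan is to verify that $\pi(C^{\ast}_u(SL_q(2,\mathbb{R})_t)) \subset \mathbb{K}(H)$ for every irreducible $\ast$-representation $(\pi,H)$. Since $\mathbb{K}(H)$ is norm-closed in $\mathbb{B}(H)$ and $C_c(SL_q(2,\mathbb{R})_t)$ is dense in $C^{\ast}_u(SL_q(2,\mathbb{R})_t)$ by the very definition of $\lVert\cdot\rVert_u$, it is enough to check that $\pi(x)$ is compact for every $x \in C_c(SL_q(2,\mathbb{R})_t) = \mathcal{O}_q(S_t^2) \bowtie \mathcal{I}_c$. Writing such an $x$ as a finite sum $x = \sum_{|n|\le N} b_n \Phi_n$ with $b_n \in \mathcal{O}_q(S_t^2)$ yields
\[
\pi(x) = \sum_{|n|\le N} \pi(b_n)\pi(\Phi_n),
\]
a finite sum of bounded operators, each factored through the self-adjoint projection $\pi(\Phi_n)$. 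The task therefore reduces to showing that each $\pi(\Phi_n)$ is of finite rank.

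For this I would restrict $\pi$ to the dense $\ast$-subalgebra $C_c(SL_q(2,\mathbb{R})_t)$, obtaining a non-degenerate irreducible $\ast$-representation on $H$, and apply the one-to-one correspondence proved earlier in Section 4 to pass to an admissible irreducible $\ast$-representation of $U_q(\mathfrak{sl}(2,\mathbb{R})_t)$. These have been classified by De Commer--Dzokou Talla in \cite[Theorem 3.17, Remark 3.18]{DCDz1}: the list consists of $\mathds{1}$, the principal series $\mathcal{L}_\lambda^{\pm}$, the discrete series $\mathcal{D}_n^{\pm}$ and $\mathcal{E}_n^{\pm}$, and the complementary series $\mathcal{C}$. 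In each case every weight space $\Phi_n H$ is at most one-dimensional, so $\pi(\Phi_n)$ has rank at most one, $\pi(x)$ is a finite sum of rank-one operators, and hence compact. Taking norm limits then gives $\pi(C^{\ast}_u(SL_q(2,\mathbb{R})_t)) \subset \overline{\pi(C_c(SL_q(2,\mathbb{R})_t))} \subset \mathbb{K}(H)$.

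The step I expect to require the most care, and which constitutes the main obstacle, is verifying that the restriction of $\pi$ to $C_c(SL_q(2,\mathbb{R})_t)$ automatically satisfies the finite-dimensional weight-space hypothesis built into the correspondence, so that admissibility is available as an output rather than an assumption. I would argue this by exploiting that $\mathcal{I}_c = \bigoplus_n \mathbb{C}\Phi_n$ is commutative, so that the projections $\pi(\Phi_n)$ form a pairwise orthogonal family summing (by non-degeneracy of the extension to $C^{\ast}_u$) strongly to $\id_H$. Irreducibility of $\pi$ on the full crossed-product algebra, together with the commutation relations of $\mathcal{O}_q(S_t^2) \bowtie \mathcal{I}_c$ and the fact that the spectrum of the self-adjoint Askey--Wilson generator $u^1_{[a],[a]} \in \mathcal{O}_q(S_t^2)$ is controlled by a bounded real interval (Lemma \ref{AW}), should force each non-zero $\pi(\Phi_n)H$ to be at most one-dimensional, matching the classified list. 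Once this rigidity is established, the compactness conclusion, and hence liminality, follows as described.
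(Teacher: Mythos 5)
Your overall architecture is exactly the paper's: reduce by density of $C_c(SL_q(2,\mathbb{R})_t)$ in $C^{\ast}_u(SL_q(2,\mathbb{R})_t)$ and norm-closedness of $\mathbb{K}(H)$ to elements $x=\sum_{|n|\le N}b_n\Phi_n$, observe that $\pi(x)$ is compact once each $\pi(\Phi_n)$ has finite rank, and conclude. The paper disposes of the finite-rank claim in one line by citing \cite[Theorem 2.14]{MT}, which asserts that an irreducible nondegenerate $\ast$-representation of $C_c(SL_q(2,\mathbb{R})_t)$ is automatically admissible with an orthonormal basis of weight vectors, one per weight, so that $\pi(\Phi_n)$ is a rank-one projection. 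You correctly identify this as the only nontrivial point, but your route to it does not close: the correspondence proposition in Section 4 takes finite-dimensionality of the $\Phi_n\mathcal{H}$ as a \emph{hypothesis}, so you cannot use it together with the De Commer--Dzokou Talla classification to \emph{derive} that finite-dimensionality, and your proposed substitute --- that the spectrum of the self-adjoint element $u^1_{[a],[a]}$ lies in a bounded real interval --- proves nothing about multiplicities: a self-adjoint operator with bounded spectrum can perfectly well have infinite-dimensional eigenspaces. As written, the step ``irreducibility $\ldots$ should force each non-zero $\pi(\Phi_n)H$ to be at most one-dimensional'' is an assertion, not an argument.

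The gap is fillable, and you are gesturing at the right ingredients, but the mechanism is commutativity of a corner rather than boundedness of a spectrum. Since $\pi$ is irreducible, the compression $\pi(\Phi_n)\pi(C_c(SL_q(2,\mathbb{R})_t))\pi(\Phi_n)$ acts irreducibly on $\pi(\Phi_n)H$ (the standard hereditary-corner argument: a proper invariant subspace of the corner would produce a proper invariant subspace of $H$). The commutation relations in $\mathcal{O}_q(S_t^2)\bowtie\mathcal{I}_c$ show that $\Phi_n\,b\,\Phi_n$ only survives on the weight-$(n,n)$ component of $b$, so the corner $\Phi_n\bigl(\mathcal{O}_q(S_t^2)\bowtie\mathcal{I}_c\bigr)\Phi_n$ is spanned by the elements $\Phi_n u^m_{[a+2n],[a+2n]}\Phi_n$; by (the analogue for general diagonal entries of) Lemma \ref{AW} these all lie in the algebra generated by a single self-adjoint element, hence the corner is commutative. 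A commutative $\ast$-algebra acting irreducibly on a Hilbert space forces that space to be at most one-dimensional, which is the rank-one statement you need. Either supply this argument or simply cite \cite[Theorem 2.14]{MT} as the paper does; with that step in place the rest of your proof is correct and identical to the paper's.
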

\begin{proof}
Let $(\pi,H)$ be a unitary irreducible $\ast$-representation of $C^{\ast}_u(SL_q(2,\mathbb{R}))$ on a Hilbert space $H$. Let $\mathcal{H}\coloneqq\pi(\mathcal{I}_c)H$. Note that $\mathcal{H}$ is a preHilbert space whose completion is $H$ and $\pi$ induces an irreducible $\ast$-representation of $C_c(SL_q(2,\mathbb{R}))$. Let $f\in C_c(SL_q(2,\mathbb{R}))$ be given. By \cite[Theorem 2.14]{MT}, we can take an orthonormal basis $\{e_n\}_{n\in S}$ of $\mathcal{H}$ such that $\sqrt{-1}\pi(B_t)e_n=[a+2n]e_n$ for each $n$, where $S\coloneqq\mathbb\{n\in\mathbb{Z}\mid \Phi_n\mathcal{H}\neq0\}$. Write $f=\sum_{n\in F}b_n\Phi_n$, where $F\subset S$ is a finite subset and $b_n\in\mathcal{O}_q(S_t^2)$ for each $n$. Notice that $\pi(\Phi_n)$ is rank $1$ projection. Then $\pi(f)$ is a sum of finite rank operator and hence $\pi(f)$ is compact.\par
Let $\pi$ be an irreducible $\ast$-representation of the $C^{\ast}$-algebra $C^{\ast}_u(SL_q(2,\mathbb{R}))$ on a Hilbert space $H$. Let $f\in C^{\ast}_u(SL_q(2,\mathbb{R}))$ and $\epsilon>0$ be given, then there exists some $g\in C_c(SL_q(2,\mathbb{R}))$ such that $||f-g||_{\text{full}}<\epsilon$. Then
\begin{align*}
    ||\pi(f)-\pi(g)||\leq||f-g||_{\text{full}}<\epsilon.
\end{align*}
Hence the operator $\pi(f)$ is compact. This proves the theorem.
\end{proof}
\begin{cor}
The von Neumann algebra $C_c(SL_q(2,\mathbb{R}))''$ is not a factor.
\end{cor}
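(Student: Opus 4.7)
The plan is to combine the liminality of $C^*_u(SL_q(2,\mathbb{R})_t)$ just established with the existence of several inequivalent irreducible $*$-representations of $C_c(SL_q(2,\mathbb{R})_t)$, in order to exhibit a nontrivial central projection in the bicommutant. I read $C_c(SL_q(2,\mathbb{R}))''$ as the bicommutant in the universal $*$-representation, that is, as the enveloping von Neumann algebra $C^*_u(SL_q(2,\mathbb{R})_t)^{**}$.

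First I would invoke the general fact that a liminal (CCR) $C^*$-algebra is of type $\mathrm{I}$, so that $M \defeq C^*_u(SL_q(2,\mathbb{R})_t)^{**}$ is a type $\mathrm{I}$ von Neumann algebra. For such an $M$ the center $Z(M)$ is nontrivial as soon as there exist two inequivalent irreducible $*$-representations of $C^*_u(SL_q(2,\mathbb{R})_t)$: their central support projections in the universal representation are orthogonal, nonzero, and neither equals the identity, so they already witness a nontrivial element of $Z(M)$.

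Such a pair is immediately available from the material already developed in the paper: the trivial representation $\mathds{1}$ (the unit object of the representation category, identified in the earlier proposition of Section 4) and the discrete series $\mathcal{D}_2^+$ from \cite{DCDz1}. Both are irreducible $*$-representations of $C_c(SL_q(2,\mathbb{R})_t)$, and they are manifestly inequivalent since $\mathds{1}$ is one-dimensional while $\mathcal{D}_2^+$ is infinite-dimensional. Any two inequivalent irreducible $*$-representations of a $C^*$-algebra are disjoint, so the corresponding central supports in $M$ are orthogonal. Hence $Z(M) \neq \mathbb{C}\cdot 1$ and $M$ is not a factor.

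There is no serious obstacle here, since both ingredients — liminality of the full $C^*$-algebra and the existence of plenty of inequivalent irreducibles — are already in hand. The only mild point worth clarifying is the choice of ambient Hilbert space used to form the double commutant; the argument above is cleanest for the universal enveloping von Neumann algebra, but it applies verbatim to any $*$-representation that contains both $\mathds{1}$ and a discrete series as subrepresentations, which covers every natural interpretation of $C_c(SL_q(2,\mathbb{R}))''$ appearing in the paper.
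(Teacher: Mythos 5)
The paper actually gives no proof of this corollary at all --- it is stated bare after the liminality theorem --- so there is nothing to compare against except the implicit suggestion that it follows from liminality. Your argument is correct and, in fact, shows that liminality is not really needed: once $C_c(SL_q(2,\mathbb{R})_t)''$ is read as the bicommutant in the universal representation, any two unitarily inequivalent irreducible $*$-representations are disjoint, their central supports are orthogonal nonzero central projections, and the center is nontrivial; the pair $\mathds{1}$ and $\mathcal{D}_2^{+}$ (irreducible of dimensions $1$ and $\infty$, both bounded by $\|\cdot\|_u$ by construction of the universal norm) does the job, and the detour through ``liminal $\Rightarrow$ type I'' can be deleted. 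The one point I would push back on is your closing claim that the argument applies ``verbatim to any natural interpretation'' of the double commutant: if the paper means the bicommutant inside the regular representation $(\pi_{\mathrm{reg}},H_{\mathrm{reg}})$ of Proposition \ref{fflrep}, the trivial representation need not occur there as a subrepresentation (just as for classical $SL(2,\mathbb{R})$), so your specific pair is not available; you should instead use two inequivalent constituents that do occur in $\pi_{\mathrm{reg}}$, e.g.\ $\mathcal{D}_2^{+}$ versus $\mathcal{D}_2^{-}$ or two distinct principal series, after which the same disjointness argument goes through. With that substitution the proof covers both readings, and in either case it is a complete argument where the paper supplies none.
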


\section{Research directions}
There are many interesting problems on the quantum $SL(2,\mathbb{R})$. The following are some of them. 
\begin{itemize}
\item To compute the growth of each of $1$-cocycles $C_{\pm}$: In our previous work \cite{MT} we decomposed $1$-cocycle $C$ into the sum of $1$-cocycles $C_+$ and $C_-$. We know the growth of $C$, however we do not yet know the growth of each of $C_{\pm}$. How should we compute these?
\item To give definitions of group theoretical properties of the quantum $SL(2,\mathbb{R})$ such as unimodularity: Recall that a Hopf algebra $H$ is called \emph{unimodular} if the following two sets, each of which is called \emph{the space of left integrals} and \emph{the space of right integrals}, respectively, coincide:
\begin{align*}
    I_l&\coloneqq\{x\in H\mid ax=\epsilon(a)x\hspace{1mm}\text{for all}\hspace{1mm}a\in H\},\\
    I_r&\coloneqq\{x\in H\mid xa=\epsilon(a)x\hspace{1mm}\text{for all}\hspace{1mm}a\in H\}.
\end{align*}
In this sense $U_q(\mathfrak{su}(2))$ is unimodular.
Let 
 \begin{align*}
     I_l&\coloneqq\{x\in U_q(\mathfrak{sl}(2,\mathbb{R})_t)\mid ax=\epsilon(a)x\hspace{1mm}\text{for all}\hspace{1mm}a\in U_q(\mathfrak{sl}(2,\mathbb{R})_t)\},\\
    I_r&\coloneqq\{x\in U_q(\mathfrak{sl}(2,\mathbb{R})_t)\mid xa=\epsilon(a)x\hspace{1mm}\text{for all}\hspace{1mm}a\in U_q(\mathfrak{sl}(2,\mathbb{R})_t)\}.
 \end{align*}
These spaces coincide:
\begin{prop}
The space $I_l$ is zero. Therefore the space $I_r$ is zero.
\end{prop}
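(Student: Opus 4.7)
The plan is to exploit the $\bowtie$-decomposition $U_q(\mathfrak{sl}(2,\mathbb{R})_t)\simeq\mathcal{O}_q(S_t^2)\otimes\mathbb{C}[B_t]$ of vector spaces, reduce the left-integral equation to a purely multiplicative condition inside $\mathcal{O}_q(S_t^2)$, and then finish by invoking that $\mathcal{O}_q(SU(2))$ is a domain.

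Concretely, I would take $x\in I_l$ and write $x=\sum_{n\geq0}a_nB_t^n$ with $a_n\in\mathcal{O}_q(S_t^2)$, only finitely many nonzero (this is the PBW-type decomposition built into the $\bowtie$ construction of De Commer--Dzokou Talla). For any $b\in\mathcal{O}_q(S_t^2)\subset U_q(\mathfrak{sl}(2,\mathbb{R})_t)$, the product in the $\bowtie$-algebra requires no straightening when $b$ is pushed onto the left, giving
\[
bx=\sum_n(ba_n)B_t^n.
\]
Comparing with $\epsilon(b)x=\sum_n\epsilon(b)a_nB_t^n$ and using the linear independence of $\{B_t^n\}_{n\geq0}$ over $\mathcal{O}_q(S_t^2)$ yields
\[
ba_n=\epsilon(b)a_n\quad\text{in }\mathcal{O}_q(S_t^2)\text{ for every }n.
\]

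Next I would exhibit a specific nonzero $b\in\mathcal{O}_q(S_t^2)$ with $\epsilon(b)=0$. A convenient choice is $b\coloneqq u^1_{[a+2],[a]}$, the matrix coefficient pairing the weight-$[a+2]$ and weight-$[a]$ eigenvectors in the spin-$1$ representation; by the basis description for $\mathcal{O}_q(S_t^2)$ recalled earlier in the paper it lies in $\mathcal{O}_q(S_t^2)$, it is nonzero (a matrix coefficient of an irreducible representation), and it satisfies $\epsilon(b)=\langle\eta^1_{[a+2]},\eta^1_{[a]}\rangle=0$ since $\eta^1_{[a+2]}$ and $\eta^1_{[a]}$ are eigenvectors of the self-adjoint operator $\pi_1(\sqrt{-1}R(B_t))$ for distinct eigenvalues. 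Hence $ba_n=0$ in $\mathcal{O}_q(SU(2))$, and since $\mathcal{O}_q(SU(2))$ is a domain (\cite{KS}) with $b\neq0$, every $a_n$ vanishes, so $x=0$. This establishes $I_l=\{0\}$.

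For the assertion about $I_r$, I would appeal to the $*$-structure on $U_q(\mathfrak{sl}(2,\mathbb{R})_t)$ (note $B_t^*=-B_t$, so $\mathbb{C}[B_t]$ is $*$-stable): taking adjoints in $xa=\epsilon(a)x$ yields $a^*x^*=\overline{\epsilon(a)}x^*=\epsilon(a^*)x^*$, so $*$ sends $I_r$ into $I_l$, and $I_l=\{0\}$ therefore forces $I_r=\{0\}$. The only real delicacy in this plan is the first step---confirming that left multiplication by elements of $\mathcal{O}_q(S_t^2)$ involves no straightening and that the PBW-type vector-space isomorphism $\mathcal{O}_q(S_t^2)\otimes\mathbb{C}[B_t]\to U_q(\mathfrak{sl}(2,\mathbb{R})_t)$ holds---but both are built into the definition of $\bowtie$ in \cite{DCDz1}, so they should be cited rather than re-proved.
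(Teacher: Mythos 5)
Your proof is correct, and its engine is the same as the paper's: restrict the left-integral identity to left multiplication by elements of $\mathcal{O}_q(S_t^2)$, reduce to an equation of the form $(b-\epsilon(b)1)\cdot(\text{component})=0$ inside the coordinate ring, kill the components using the absence of zero divisors, and finally deduce $I_r=0$ from $I_r^{\ast}\subset I_l$. Where you genuinely differ is in the decomposition of $x$. The paper first notes that an integral must land in $\mathcal{I}$, writes $x=\sum_n\Phi_n b_n$ in terms of the spectral projections $\Phi_n$, and must then commute $\Phi_0$ past $b_0$ to bring the expression into the form $\sum_n b'_n\Phi_n$ before the domain argument applies. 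You instead use the PBW-type vector-space isomorphism $\mathcal{O}_q(S_t^2)\otimes\mathbb{C}[B_t]\simeq U_q(\mathfrak{sl}(2,\mathbb{R})_t)$ and expand $x=\sum_n a_nB_t^n$; this stays entirely inside $U_q(\mathfrak{sl}(2,\mathbb{R})_t)$, requires no straightening (left multiplication by $b\bowtie 1$ acts only on the first tensor factor, by the definition of $\bowtie$), and replaces the paper's preliminary claim $I\subset\mathcal{I}$ by the linear independence of $\{B_t^n\}_{n\geq0}$ over $\mathcal{O}_q(S_t^2)$ --- which holds since $\mathbb{C}[B_t]$ is a genuine polynomial ring ($\sqrt{-1}B_t$ has infinitely many distinct eigenvalues $[a+n]$). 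Your explicit witness $b=u^1_{[a+2],[a]}$, nonzero with $\epsilon(b)=\langle\eta^1_{[a+2]},\eta^1_{[a]}\rangle=0$, is a clean way to trigger the domain argument; the paper instead quantifies over all $y$ and cancels $(y-\epsilon(y)1)$. One cosmetic difference: the paper cites that the Podle\'{s} sphere itself is an integral domain, while you use the more standard fact that $\mathcal{O}_q(SU(2))$ is --- either suffices, since the former embeds in the latter. Overall your route is slightly more self-contained and buys you a proof that never leaves the algebra $U_q(\mathfrak{sl}(2,\mathbb{R})_t)$ in which $I_l$ and $I_r$ are defined.
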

\begin{proof}
Note that $I\subset\mathcal{I}$. Let $x\in I_l$. Write $x=\sum_{n}\Phi_{n}b_n$, where $b_n\in\mathcal{O}_q(S_t^2)$. Then by the commuting relations  $\Phi_{0}x=\Phi_{0}b_0=\sum_{n}b'_n\Phi_n$ for some $b'_n\in\mathcal{O}_q(S_t^2)$. Let $y\in\mathcal{O}_q(S_t^2)$. Then we have
\begin{align*}
    \displaystyle\sum_{n}yb'_{n}\Phi_n&=y\Phi_{0}x=\epsilon(y)\Phi_{0}x=\displaystyle\sum_{n}\epsilon(y)b'_{n}\Phi_n.
\end{align*}
Hence $yb'_n=\epsilon(y)b'_n$ for each $n$. Since the Podle\'{s} sphere $\mathcal{O}_q(S_t^2)$ is an integral domain, we have $b'_n=0$ for all $n$. Thus $\Phi_{0}x=0$. In a similar way, we can show that $\Phi_{m}x=0$ for all $m\in\mathbb{Z}$. Hence we have $I_l=0$. From this, it follows that $I^{\ast}_r\subset I_l=0$.  
\end{proof}
Is this an appropriate way of defining the unimodularity for the quantum $SL(2,\mathbb{R})$? 
    \item To give definitions and characterizations of properties such as Haagerup property, property (T), amenability et cetera:
We know that
\begin{itemize}
    \item The trivial representation is not isolated. 
    \item There exists a $1$-cocycle which is not a coboundary.
\end{itemize}
From these facts, we should consider that the quantum $SL(2,\mathbb{R})$ is not of property (T). We have a proper $1$-cocycle on the quantum $SL(2,\mathbb{R})$ \cite{MT}. Thus we should consider that the quantum $SL(2,\mathbb{R})$ is of Haagerup property. Considering this situations, probably we may define the property (T) and Haagerup property for quantized locally compact real Lie groups as coideals through the languages of cocycles. How should we define other properties? If we can define these concepts, how can we prove these properties? In \cite{And} the concepts of amenability and coamenability on coideals is considered. Can we show that the quantum $SL(2,\mathbb{R})$ is not (co)amenable in some sense (for instance in the sense of \cite{And})? 
    \item Schoenberg correspondence and to give a probabilistic interpretation of our $1$-cocycle: On the bialgebras $1$-cocycles, generating functionals, L\'{e}vy process et cetera are in one to one correspondence because the convolution product is a well-defined notion. As in \cite{DFW}, if coideals in question are expected, then the convolution product is well-defined. Thus, in the case of the quantum $SL(2,\mathbb{R})$, perhaps we can consider and prove this correspondence if we take the standard Podle\'{s} sphere. However how should we struggle this problem when we take the non-standard Podle\'{s} sphere? Also is there a probability theoretical interpretation of our purely non-Gaussian $1$-cocycle?
\end{itemize}
 


\begin{thebibliography}{99}
\bibitem{And} B.\ Anderson-Sackaney, On Amenable and Coamenable Coideals, preprint arXiv:2003.04384v3.
\bibitem{DCDz1} K.\ De Commer, and J.\ R.\ Dzokou Talla, Quantum $SL(2,\mathbb{R})$ and its irreducible representations, preprint arXiv:2107.04258.
\bibitem{DCDz2} K.\ De Commer, and J.\ R.\ Dzokou Talla, Invariant integrals on coideals and their Drinfeld doubles, preprint arXiv:2112.07476v1.
\bibitem{DFW} B.\ Das, U.\ Franz and X.\ Wang, Invariant Markov semigroups on quantum homogeneous spaces, preprint arXiv:1901.00791v2. 
\bibitem{FGT} U.\ Franz, M.\ Gerhold, A.\ Thom, On the Lévy-Khinchin decomposition of generating functionals, \emph{Communications on Stochastic Analysis}, Vol 9, nr 4, pp. 529-544, December 2015.
\bibitem{G} J-K.\ G\"{u}nther, The $C^*$-algebra of $SL(2,\mathbb{R})$, prprint  arXiv:1605.09256v2.
\bibitem{KS} A.\ Klimyk and K.\ Schmüdgen, Quantum groups and their representations, Texts and Monographs in Physics, Springer, 1997.
\bibitem{K}  T.\ H.\ Koornwinder, Askey-Wilson polynomials as zonal spherical functions on the $SU(2)$ quantum group, \emph{SIAM J.\ Math.\ Anal.} \textbf{24} (1993), 795–813.
\bibitem{M+}  T. Masuda, K. Mimachi, Y. Nakagami, M. Noumi, Y. Saburi and K. Ueno, Unitary representations of
the quantum group $SU_q(1, 1)$: Structure of the dual space of $\mathcal{U}_q(\mathfrak{sl}(2))$, \emph{Lett. Math. Phys.} \textbf{19} (1990),
197–194
\bibitem{Mic} M.\ Schürmann, White Noise on Bialgenras,  Lecture Notes in Mathematics, volume \textbf{1544}, Springer-Verlag, Berlin, Heidelberg, 1993.
\bibitem{MT} M.\ Tanaka, A proper 1-cocycle on a Podle\'{s} sphere, preprint, arXiv: 2305.10048
\end{thebibliography}
\end{document}